\newenvironment{proof}{\noindent{\it Proof.\,}}{\hfill$\Box$}
\newtheorem{thm}{Theorem}%[section]
\newtheorem{lem}[thm]{Lemma}
\newtheorem{defn}[thm]{Definition}
\newcommand{\real}{\ensuremath {\mathbb R} }
\newcommand{\ent}{\ensuremath {\mathbb Z} }
\newcommand{\nat}{\ensuremath {\mathbb N} }
\newcommand{\remove}[1] {}
\newcommand{\ex} {{\bf E}}
\newcommand{\pr} {{\bf Pr}}
\newcommand{\eps}{\varepsilon}
\renewcommand{\phi}{\varphi}
\newcommand{\A}{{\cal{A}}}
\newcommand{\G}{{\mathcal{G}}}
\DeclareMathOperator{\Bin}{Bin}
\DeclareMathOperator{\vol}{vol}
\def\pr{{\mathbb P}}
\title{Modularity of complex networks models}%\thanks{This is a journal version of the paper appeared in Proc. WAW’16, LNCS 10088, pp. 115-126, 2016.}}
\author{
Liudmila Ostroumova Prokhorenkova$^{1,2}$
\and
Pawe{\l} Pra{\l}at$^{3,4}$
\and
Andrei Raigorodskii$^{1,2,5,6}$
}
\date{%
    $^1$Moscow Institute of Physics and Technology, Moscow, Russia\\%
    $^2$Yandex, Moscow, Russia\\%
	$^3$Ryerson University, Toronto, ON, Canada\\
    $^4$The Fields Institute for Research in Mathematical Sciences, Toronto, ON, Canada\\
	$^5$Moscow State University, Moscow, Russia\\
    $^6$Buryat State Unversity, Ulan-Ude, Buryat Republic, Russia
}
\begin{document}

\maketitle

\begin{abstract}
Modularity is designed to measure the strength of division of a network into clusters (known also as communities). Networks with high modularity have dense connections between the vertices within clusters but sparse connections between vertices of different clusters. As a result, modularity is often used in optimization methods for detecting community structure in networks, and so it is an important graph parameter from a practical point of view. Unfortunately, many existing non-spatial models of complex networks do not generate graphs with high modularity; on the other hand, spatial models naturally create clusters. We investigate this phenomenon by considering a few examples from both sub-classes. We prove precise theoretical results for the classical model of random $d$-regular graphs as well as the preferential attachment model, and contrast these results with the ones for the spatial preferential attachment (SPA) model that is a model for complex networks in which vertices are embedded in a metric space, and each vertex has a sphere of influence whose size increases if the vertex gains an in-link, and otherwise decreases with time.
The results obtained in this paper can be used for developing statistical tests for models selection and to measure statistical significance of clusters observed in complex networks.
\end{abstract}

\section{Introduction}

Many social, biological, and information systems can be represented by networks, whose vertices are items and links are relations between these items~\cite{BA_Review,BioInfoPrior,Networks,Chayes}. That is why the evolution of complex networks attracted a lot of attention in recent years and there has been a great deal of interest in modelling of these networks~\cite{Math_Results,Costa,Newman3}. The hyperlinked structure of the Web, citation patterns, friendship relationships, infectious disease spread are seemingly disparate linked data sets which have fundamentally very similar natures. Indeed, it turns out that many real-world networks have some typical properties: heavy tailed degree distribution, small diameter, high clustering coefficient, and others~\cite{Newman1,Newman2,Watts}. Such properties are well-studied both in real-world networks and in many theoretical models.

Another important property of complex networks is their community structure, that is, the organization of vertices in clusters, with many edges joining vertices of the same cluster and comparatively few edges joining vertices of different clusters~\cite{Fortunato,Girvan}. In social networks communities may represent groups by interest, in citation networks they correspond to related papers, in the Web communities are formed by pages on related topics, etc. Being able to identify communities in a network could help us to exploit this network more effectively. For example, clusters in citation graphs may help to find similar scientific papers, discovering users with similar interests is important for targeted advertisement, clustering can also be used for network compression and visualization.

The key ingredient for many clustering algorithms is \textit{modularity}, which is at the same time a global criterion to define communities, a quality function of community detection algorithms, and a way to measure the presence of community structure in a network. Modularity was introduced by Newman and Girvan~\cite{Mod2} and it is based on the comparison between the actual density of edges inside a community and the density one would expect to have if the vertices of the graph were attached at random, regardless of community structure. 

Unfortunately, modularity is not a well studied parameter for the existing random graph models, at least from a rigorous, theoretical point of view. We are only aware about results for binomial random graphs $G(n,p)$ and random $d$-regular graphs (see Section~\ref{sec:previous_results} for more details). In this paper, we continue investigating random $d$-regular graphs and obtain new upper bounds for their modularity.  Then we move to the \emph{preferential attachment model}, introduced by Barab\'asi and Albert~\cite{BA}, which is probably the most well-studied model of complex networks. For this model no results on modularity are known and we obtain both lower and upper bounds. In fact, one of the lower bound we present holds for all graphs with average degree $d$ and sublinear maximum degree.

As expected, the models discussed above, as well as many others, have a common weakness of low modularity. One family of models which overcomes this deficiency is the family of spatial (or geometric) models, wherein the vertices are embedded in a metric space such that similar vertices are closer to each other than dissimilar ones. The underlying geometry of spatial models naturally leads to the emergence of clusters. We prove this statement rigorously for one example of a geometric model, the Spatial Preferential Attachment model introduced in~\cite{spa1}.

This paper is a journal version of~\cite{proceedings} and is structured as follows. In the next section, we formally define modularity, discuss several random graph models and present known results on modularity in these models. In Sections~\ref{sec:d-reg}, \ref{sec:PA} and~\ref{sec:SPA} we analyze modularity in random $d$-regular graphs, preferential attachment and SPA models, respectively. In Section~\ref{sec:lower} we discuss lower bounds for modularity of forests and constant average degree graphs. Section~\ref{sec:conclusion} concludes the paper and outlines the directions for future research.

\section{Preliminaries}

\subsection{Modularity}\label{sub:modularity}

The definition of modularity was first introduced by Newman and Girvan in~\cite{Mod2}. Since then, many popular and applied algorithms used to find clusters in large data-sets are based on finding partitions with high modularity~\cite{Clauset,Mod1,Newman}. The modularity function favours partitions in which a large proportion of the edges fall entirely within the parts and biases against having too few or too unequally sized parts. Formally, for a given partition $\A = \{A_1, \ldots, A_k\}$ of the vertex set $V(G)$, let
\begin{equation}\label{eq:q_A}
q_{\A} = \sum_{A \in \A} \left( \frac {e(A)}{|E(G)|} - \frac { (\sum_{v \in A} \deg(v))^2 }{4|E(G)|^2} \right),
\end{equation}
where $e(A) = |\{ uv \in E(G) : u,v \in A\}|$ is the number of edges in the graph induced by the set $A$. The first term, $\sum_{A \in \A} \frac {e(A)}{|E(G)|}$, is called the \emph{edge contribution}, whereas the second one, $\sum_{A \in \A} \frac { (\sum_{v \in A} \deg(v))^2 }{4|E(G)|^2}$, is called the \emph{degree tax}. 
It is easy to see that $q_{\A}$ is always smaller than one. Also, if $\A = \{V(G)\}$, then $q_{\A} = 0$.

The \emph{modularity} $q^*(G)$ is defined as the maximum of $q_{\A}$ over all possible partitions $\A$ of $V(G)$; that is,
$$
q^*(G) = \max_{\A} q_{\A} (G).
$$
In order to maximize $q_{\A} (G)$ one wants to find a partition with large edge contribution subject to small degree tax. If $q^*(G)$ approaches 1 (which is the maximum), we observe a strong community structure; conversely, if $q^*(G)$ is close to zero, we are given a graph with no community structure. 

Modularity is known to have some weaknesses, as discussed in~\cite{Fortunato}. For example, \cite{ResolutionLimit} shows that this measure fails to detect communities if their sizes are too small. However, despite this, modularity still remains to be the most popular measure used by many well known clustering algorithms~\cite{Clauset,Mod1,Newman}.

\subsection{Random graph models}

\paragraph{Random $d$-regular graphs.}

%Let us recall two classical models of random graphs that are extensively studied in the literature. The \emph{binomial random graph} $\G(n,p)$ is the random graph $G$ with the vertex set $[n] := \{ 1, 2, \ldots, n\}$ in which every pair $\{i,j\} \in \binom{[n]}{2}$ appears independently as an edge in $G$ with probability~$p$. Note that $p=p(n)$ may (and usually does) tend to zero as $n$ tends to infinity.

%However, in this paper we concentrate on another probability space, 
We consider
the probability space of \emph{random $d$-regular graphs} with uniform probability distribution. This space is denoted $\mathcal{G}_{n,d}$, and asymptotics are for $n\to\infty$ with $d\ge 2$ fixed, and $n$ even if $d$ is odd.

We say that an event in a probability space holds \emph{asymptotically almost surely} (or \emph{a.a.s.}) if the probability that it holds tends to $1$ as $n$ goes to infinity. Since we aim for results that hold a.a.s., we will always assume that $n$ is large enough.

\paragraph{Preferential Attachment.}

The \emph{Preferential Attachment} (PA) model~\cite{BA} was an early stochastic model of complex networks. We will use the following precise definition of the model, as considered by Bollob\'as and Riordan in~\cite{BR} as well as Bollob\'as, Riordan, Spencer, and Tusn\'ady~\cite{BRST}.

Let $G_1^0$ be the null graph with no vertices (or let $G_1^1$ be the graph with one vertex, $v_1$, and one loop). The random graph process $(G_1^t)_{t \ge 0}$ is defined inductively as follows. Given $G_1^{t-1}$, we form $G_1^t$ by adding a vertex $v_t$ together with a single edge between $v_t$ and $v_i$, where $i$ is selected randomly with the following probability distribution:
$$
\pr(i = s) =
\begin{cases}
\deg(v_s,t-1) / (2t-1) & 1 \le s \le t-1, \\
1/(2t-1) & s=t,
\end{cases}
$$
where  $\deg(v_s,t-1)$ denotes the degree of $v_s$ in $G_1^{t-1}$ (loops are counted twice). In other words, at $t$-th step of the process we send an edge $e$ from $v_t$ to a random vertex $v_i$, where the probability that a vertex is chosen is proportional to its current degree, counting $e$ as already contributing one to the degree of $v_t$.

For $m \in \nat \setminus \{1\}$, the process $(G_m^t)_{t \ge 0}$ is defined similarly with the only difference that $m$ edges are added to $G_m^{t-1}$ to form $G_m^t$ (one at a time), counting previous edges as already contributing to the degree distribution. Equivalently, one can define the process $(G_m^t)_{t \ge 0}$ by considering the process $(G_1^t)_{t \ge 0}$ on a sequence $v'_1, v'_2, \ldots$ of vertices; the graph $G_m^t$ is formed from $G_1^{tm}$ by identifying vertices $v'_1, v'_2, \ldots, v'_m$ to form $v_1$, identifying vertices $v'_{m+1}, v'_{m+2}, \ldots, v'_{2m}$ to form $v_2$, and so on. Note that in this model $G_m^t$ is in general a multigraph, possibly with multiple edges between two vertices (if $m\ge2$) and self-loops. 

It was shown in~\cite{BRST} that for any $m \in \nat$ a.a.s.\ the degree distribution of $G_m^n$ follows a power law: the number of vertices with degree at least $k$ falls off as $(1+o(1)) ck^{-2} n$  for some explicit constant $c=c(m)$ and large $k \le n^{1/15}$. Also, in the case $m=1$, each vertex sends an edge either to itself or to an earlier vertex, so $G_1^n$ is a forest with each component containing a single looped vertex. The expected number of components is then $\sum_{t=1}^n 1/(2t-1) \sim (1/2) \log n$ and, since events are independent, we derive that a.a.s.\ there are $(1/2+o(1)) \log n$ components in $G_1^n$ by Chernoff's bound. 
%Moreover, Pittel~\cite{Pittel} proved that a.a.s.\ the largest distance between two vertices in the same component of $G_1^n$ is $(\gamma^{-1} + o(1)) \log n$, where $\gamma$ is the solution of $\gamma e^{1+\gamma} = 1$. 
In contrast, for the case $m \ge 2$ it is known that a.a.s.\ $G_m^n$ is connected and its diameter is $(1+o(1)) \log n / \log \log n$~\cite{BR}. 

\paragraph{Spatial Preferential Attachment.}

The \emph{Spatial Preferential Attachment} (SPA) model~\cite{spa1}, designed as a model for the World Wide Web, combines geometry and preferential attachment, as its name suggests. Setting the SPA model apart is the incorporation of `spheres of influence' to accomplish preferential attachment: the greater the degree of a vertex, the larger its sphere of influence, and hence the higher the likelihood of the vertex gaining more neighbours. 

We now give a precise description of the SPA model. Let $S = [0,1]^m$ be the unit hypercube in $\real^m$, equipped with the torus metric derived from any of the $L_p$ norms. This means that for any two points $x$ and $y$ in $S$,
\[
d(x,y)=\min \big\{ ||x-y+u||_p\,:\,u\in \{-1,0,1\}^m \big\}.
\]
The torus metric thus `wraps around' the boundaries of the  unit square; this metric was chosen to eliminate boundary effects.
%Let $c_m$ be the constant of proportionality of volume used with the  $m$-th power of the radius in $m$ dimensions, so the volume of a ball of radius $r$ in $m$-dimensional space with the given metric equals $c_m r^m$. For example, for the Euclidean metric, $c_2=\pi$, and for the product metric derived from $L_\infty$, $c_m=2^{m}$.
The parameters of the model consist of the \emph{link probability} $p\in[0,1]$, and two positive constants $A_1$ and $A_2$, which, in order to avoid the resulting graph becoming too dense, must be chosen so that $pA_1 < 1$.  The SPA model generates stochastic sequences of directed graphs $(G_{t}:t\geq 0)$, where $G_{t}=(V_{t},E_{t})$, and $V_{t}\subseteq S$. Let $\deg^{-}(v,t)$ be the in-degree of the vertex $v$ in $ G_{t}$, and $\deg^+(v,t)$ its out-degree. We define the \emph{sphere of influence} $S(v,t)$ of the vertex $v$ at time $t\geq 1$ to be the ball centered at $v$ with volume $|S(v,t)|$ defined as follows:
\begin{equation}\label{eq:volume}
|S(v,t)|=\min\left\{\frac{A_1{\deg}^{-}(v,t)+A_2}{t},1\right\}.
\end{equation}
%or $S(v,t)=S$ and $|S(v,t)|=1$ if the right-hand-side of (\ref{eq:volume}) is greater than 1.

The process begins at $t=0$, with $G_0$ being the null graph. Time step $t$, $t\geq 1$, is defined to be the transition between $G_{t-1}$ and $G_t$. At the beginning of each time step $t$, a new vertex $v_t$ is chosen \emph{uniformly at random} from $S$, and added to $V_{t-1}$ to create $V_{t}$. Next, independently, for each vertex $u\in V_{t-1}$ such that $v_t \in S(u,t-1)$, a directed link $(v_{t},u)$ is created with probability $p$. Thus, the probability that a link $(v_t,u)$ is added in time-step $t$ equals $p\,|S(u,t-1)|$.

The SPA model produces scale-free networks, which exhibit many of the characteristics of real-life networks (see~\cite{spa1,spa2}). In~\cite{spa3}, it was shown that the SPA model gave the best fit, in terms of graph structure, for a series of social networks derived from Facebook. In~\cite{spa4}, some properties of common neighbors were used to explore the underlying geometry of the SPA model and quantify vertex similarity based on distance in the space.  However, the distribution of vertices in space was assumed to be uniform~\cite{spa4} and so in~\cite{spa5} non-uniform distributions were investigated which is clearly a more realistic setting.

\subsection{Previous results on modularity}\label{sec:previous_results}

In this section we discuss known bounds for modularity in different random graph models.

The \emph{isoperimetric number} (known also as edge expansion) of a graph $G$ is defined as
$$
\rho(G) = \min_{V(G) = V_1 \cup V_2} \frac {e(V_1,V_2)} { \min \{|V_1|,|V_2|\}},
$$
where $e(V_1,V_2) = |\{ uv \in E(G) : u \in V_1, v \in V_2\}|$ is the number of edges between the sets $V_1$ and $V_2$. The following result was shown by McDiarmid and Skerman in~\cite{Colin1}. Let $G$ be any $d$-regular graph on $n$ vertices. Then, the following useful upper bound on the modularity is almost immediate:
\begin{equation}\label{eq:upper_trivial}
q^*(G) \le \max\{1-\rho(G)/d, 3/4\}.
\end{equation}
Turning to random $d$-regular graphs, Bollob\'as in~\cite{bollobas1} showed that a.a.s.\ $\rho(\G_{n,d}) \ge (1-\eta)d/2$, where $0 < \eta < 1$ is such that $2^{4/d} < (1-\eta)^{1-\eta} (1+\eta)^{1+\eta}$ and so a.a.s.
$$
q^*(\G_{n,d}) \le U_1 = U_1(d) := \max\{1/2+\eta/2, 3/4\}.
$$
As a result, we get the first non-trivial upper bounds for $q^*(\G_{n,d})$ presented in Table~\ref{tab:upper_bound} that hold a.a.s. 

In~\cite{Colin1}, the bound~(\ref{eq:upper_trivial}) was slightly improved when the maximum size of parts in our partition is restricted. Formally, given $\delta > 0$, for a graph $G$ with $n \ge 1/\delta$ vertices, they define $q_{\delta}(G)$ to be the maximum modularity of all partitions for $G$ such that each part has size at most $\delta n$. They show that for any $\eps > 0$ there exists $\delta > 0$ such any $d$-regular graph with at least $1/\delta$ vertices satisfies
$$
q_{\delta}(G) \le 1-2\rho(G)/d + \eps.
$$
Again, using the result of Bollob\'as we get that there exists $\delta > 0$ such that
$$
U_2 = U_2(d) := \eta + \eps
$$
serves as an upper bound that holds a.a.s.\ for $q_{\delta}(\G_{n,d})$; again, see Table~\ref{tab:upper_bound} for numerical values for small values of $d$. It is straightforward to see that $(G) \ge d/2 - \sqrt{ (\log 2) d }$ (see, for example,~\cite{bollobas1}) and so, in particular, $U_2$ can be made arbitrarily small by taking $d$ large enough (and $\delta$ small enough). However, let us note that these upper bounds for $q_{\delta}(\G_{n,d})$, while useful, cannot be directly translated into any bound for $q^*(\G_{n,d})$.

\begin{table}
\begin{center}
\caption{Upper bounds $U_1$, $U_3$ for $q^*(\G_{n,d})$ and $U_2$ for $q_{\delta}(\G_{n,d})$}
  \begin{tabular}{ | c || c | c | c |}
    \hline
    $d$ & $U_1$ & $U_2$ & $U_3$ \\ \hline \hline
    3 & 0.9386 & 0.8771 & 0.8038 \\
    4 & 0.8900 & 0.7800 & 0.6834 \\
    5 & 0.8539 & 0.7078 & 0.6024 \\
    6 & 0.8261 & 0.6521 & 0.5435 \\
    7 & 0.8038 & 0.6076 & 0.4984\\
    8 & 0.7855 & 0.5710 & 0.4624 \\
    9 & 0.7702 & 0.5403 & 0.4330 \\
    10 & 0.7570 & 0.5140 & 0.4083 \\
    \hline
  \end{tabular}
\label{tab:upper_bound} 
\end{center}
\end{table}

Investigating random $d$-regular graphs continues in~\cite{Colin2}, a very recent paper. In fact, the numerical upper bound presented in Section~\ref{sec:numerical}, as well as the result in Theorem~\ref{thm:U3_explicit}, are obtained independently there. Moreover,~\cite{Colin2} investigates the class of graphs whose product of treewidth and maximum degree is much less than the number of edges. Their result shows, for example, that random planar graphs typically have modularity close to 1, which is another indication that clusters naturally emerge where geometry is included. Also, a particular case of their theorem shows that trees with maximum degree $o(n)$ have asymptotic modularity one.

\section{Random $d$-regular graphs}\label{sec:d-reg}

\subsection{Pairing model}

Instead of working directly in the uniform probability space of random regular graphs on $n$ vertices $\mathcal{G}_{n,d}$, we use the \textit{pairing model} (also known as the \textit{configuration model}) of random regular graphs, first introduced by Bollob\'{a}s~\cite{bollobas2}, which is described next. Suppose that $dn$ is even, as in the case of random regular graphs, and consider $dn$ points partitioned into $n$ labelled buckets $v_1,v_2,\ldots,v_n$ of $d$ points each. A \textit{pairing} of these points is a perfect matching into $dn/2$ pairs. Given a pairing $P$, we may construct a multigraph $G(P)$, with loops allowed, as follows: the vertices are the buckets $v_1,v_2,\ldots, v_n$, and a pair $\{x,y\}$ in $P$ corresponds to an edge $v_iv_j$ in $G(P)$ if $x$ and $y$ are contained in the buckets $v_i$ and $v_j$, respectively. It is an easy fact that the probability of a random pairing corresponding to a given simple graph $G$ is independent of the graph, hence the restriction of the probability space of random pairings to simple graphs is precisely $\mathcal{G}_{n,d}$. Moreover, it is well known that a random pairing generates a simple graph with probability asymptotic to $e^{-(d^2-1)/4}$ depending on $d$, so that any event holding a.a.s.\ over the probability space of random pairings also holds a.a.s.\ over the corresponding space $\mathcal{G}_{n,d}$. For this reason, asymptotic results over random pairings suffice for our purposes. For more information on this model, see, for example, the survey of Wormald~\cite{NW-survey}.

\subsection{Lower bound}

For completeness, let us briefly discuss the following known lower bound for the modularity of $\G_{n,d}$. It is known that a.a.s.\ for any $d \in \nat \setminus \{1,2\}$, $\G_{n,d}$ is Hamiltonian.  As pointed out in~\cite{Colin1}, one can use this fact to partition the graph such that it breaks the cycle into $\lceil \sqrt{n} \rceil$ paths of length at most $\lceil \sqrt{n} \rceil$. For this particular partition the edge contribution is $2/d - O(1/\sqrt{n})$ and the degree tax is $O(1/\sqrt{n})$. It follows then that a.a.s.
$$
q^*(\G_{n,d}) \ge \frac 2d - O(1/\sqrt{n}) = \frac {2+o(1)}{d}.
$$
(Our more general lower bound that holds for graphs with average degree $d$ implies the same---see Theorem~\ref{thm:avg_degree} for more.) Whereas this trivial lower bound could be sharp for $d=3$ it is definitely not the case for large $d$. As pointed out in~\cite{Colin2}, there exists a universal constant $c>0$ such that a.a.s.\ $q^*(\G_{n,d}) \ge c/\sqrt{d}$.

\subsection{Numerical upper bound}\label{sec:numerical}

The following straightforward lemma is useful for obtaining upper bounds for modularity of random $d$-regular graphs.

\begin{lem}\label{lem:upper_bound}
Consider any $d$-regular graph on $n$ vertices $G_{n,d}$. 
If no subset of $V(G_{n,d})$ of size $xn$ induces $y x n / 2$ edges with $y/d-x \ge U$, then 
$q^*(\G_{n,d}) < U$.
\end{lem}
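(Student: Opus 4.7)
The plan is to show that $q_{\A} < U$ for every partition $\A$ of $V(G_{n,d})$; since there are only finitely many partitions for a fixed $n$, this immediately yields $q^*(G_{n,d}) < U$. The proof will be essentially an algebraic rewriting of (\ref{eq:q_A}) that exploits $d$-regularity to parameterize each part by two scalars and then apply the hypothesis termwise.

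First, for each part $A \in \A$, I would set $x_A = |A|/n$ and define $y_A$ by $e(A) = y_A x_A n / 2$ (so that $y_A$ is the average internal degree of $A$ when $A$ is nonempty; set $y_A = 0$ otherwise). Using $|E(G_{n,d})| = dn/2$ and $\sum_{v \in A}\deg(v) = d|A| = dx_A n$, the edge contribution of $A$ simplifies to $y_A x_A/d$ and the degree tax to $x_A^2$, so that the total contribution of $A$ to $q_{\A}$ is $x_A(y_A/d - x_A)$. The hypothesis, applied to each $A$ of positive size, yields $y_A/d - x_A < U$; multiplying by $x_A > 0$ preserves the strict inequality, while parts with $x_A = 0$ contribute zero trivially. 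Summing over $A \in \A$ and using $\sum_{A} x_A = 1$ (so at least one $x_A$ is positive) gives
\[
q_{\A} \;=\; \sum_{A \in \A} x_A \left( \frac{y_A}{d} - x_A \right) \;<\; U \sum_{A \in \A} x_A \;=\; U,
\]
as required.

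I do not expect any real obstacle: the lemma is a reformulation that converts a uniform bound on the density-deficit $y/d - x$ over all induced subgraphs into a bound on the modularity. The only mild point of care is preserving strict inequality when summing, which is handled by noting that $\sum_A x_A = 1$ forces at least one part to be nonempty. The genuine work lies downstream, in applying the lemma: one must still argue in the pairing model that a.a.s.\ no subset of density $x$ induces edge density exceeding $(x+U)d/2$, and then union-bound over a sufficiently fine discretization of $x \in [0,1]$.
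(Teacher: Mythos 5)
Your proof is correct and follows essentially the same approach as the paper's: rewrite $q_{\A}$ using $d$-regularity as the weighted average $\sum_i x_i(y_i/d - x_i)$ and apply the hypothesis termwise. The paper phrases it in contrapositive form (``$q_{\A}\ge U$ would imply some set with $y/d-x\ge U$''), while you argue directly and spell out the handling of empty parts and strict inequality, but the underlying argument is identical.
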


\begin{proof}
%Let us consider the following, natural, approach that already improves slightly an upper bound for $q^*(\G_{n,d})$.  Consider any $d$-regular graph with $n$ vertices. 
For a given partition $\A = \{A_1, \ldots, A_k\}$ of the vertex set $V(G)$, let $x_i = |A_i|/n$ and $y_i = 2 |E(A_i)|/|A_i|$; that is, set $A_i$ has $x_in$ vertices and induces $y_i x_i n / 2$ edges. Then, taking into account the fact that for any $A\subseteq V(G)$ we have $\sum_{v\in A} \deg(v) = d|A|$, we can rewrite~(\ref{eq:q_A}) as
\begin{equation}\label{eq:q_A_sim}
q_{\A} = \sum_{i=1}^k x_i \left( \frac{y_i}{d} - x_i \right).
\end{equation}
As it is simply a weighted average, $q_{\A} \ge U$ would imply that there exists some set of size $xn$ that induces $y x n / 2$ edges, and $y/d-x \ge U$. 
So, the proof of the lemma is finished.
%Using the pairing model, we will show that a.a.s.\ it is not the case (for some carefully chosen $U=U(d)$) and, as a result, it will yield an upper bound for $q^*(\G_{n,d})$ that holds a.a.s.
\end{proof}

To formulate the main theorem of this section, we need the following notation.
For a given $d \in \nat \setminus \{1,2\}$, let
\begin{align}
f(x,&y,d) := x(y/2-1) \log (x) + (1-x)(d-1)\log(1-x) + d \log (d) / 2  \label{eq:function_f} \\
& - xy \log (y) / 2 - x(d-y) \log(d-y) - (d-2xd+xy) \log (d-2xd+xy) /2. \nonumber
\end{align}
It will be clear once we establish the connection between function $f$ and random $d$-regular graphs, but it is straightforward to see that for any $x \in (0,1)$ we have $f(x,d,d) < 0$ (more precisely, its limit value) and $f(x,y,d) > 0$ for some $y \in (0,d)$. Indeed, for example note that $f(x,xd,d) = - x \log (x) + (x-1)\log(1-x) > 0$. Also, it is easy to see that $f(x,y,d)$ is continuous on $y \in (0,d)$.

%Moreover, for a fixed $x \in (0,1/2]$, $f(x,y,d)$ is strictly concave in $y \in (0,d)$ as 
%$$
%\frac {d^2 f(x,y,d)}{dy^2} = \frac {-(d(1-2x)+y)dx}{2(d(1-2x)+xy)(d-y)y} < 0.
%$$ 
Finally, let $\bar y = \bar y(x,d)$ be largest value of $y \in (0,d)$ such that $f(x,y,d)=0$; in particular, $f(x,y,d) < 0$ for any $y \in (\bar y, d)$. 

\begin{thm}\label{thm:U3}
Let $d \in \nat \setminus \{1, 2\}$ and $\eps >0$ be an arbitrarily small constant. Then a.a.s.
$$
q^*(\G_{n,d}) \le U_3 + \eps / d,
$$
where
$$
U_3 = U_3(d) := \sup_{x \in (0,1)} \left( \frac{\bar y(x,d)}{d} - x \right).
$$
\end{thm}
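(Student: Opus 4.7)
The plan is a first-moment argument in the pairing model, combined with the reduction of Lemma~\ref{lem:upper_bound}. By that lemma, to prove $q^*(\G_{n,d}) \le U_3 + \eps/d$ a.a.s.\ it suffices to rule out, a.a.s., any vertex subset $A$ of size $k = xn$ inducing $j = yxn/2$ edges with $y/d - x \ge U_3 + \eps/d$. For any such $(x,y)$, the definition of $U_3$ gives $\bar y(x,d)/d - x \le U_3$, so $y \ge \bar y(x,d) + \eps$; since $f(x,\bar y(x,d),d) = 0$ and $f(x,\cdot,d)<0$ past $\bar y$, this forces $f(x,y,d) \le -c(\eps,d) < 0$ uniformly over $x$ bounded away from $\{0,1\}$.

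In the pairing model $\mathcal{P}_{n,d}$, the expected number of subsets of size $k$ inducing exactly $j$ internal pair-edges equals
\begin{equation*}
\E X_{k,j} = \binom{n}{k}\,\frac{\binom{dk}{2j}\,(2j-1)!!\,(dk-2j)!\,\binom{d(n-k)}{dk-2j}\,(d(n-2k)+2j-1)!!}{(dn-1)!!},
\end{equation*}
obtained by choosing the $k$ buckets, selecting which $2j$ of the $dk$ points in $A$ are matched within $A$, forming the $j$ internal pairs in $(2j-1)!!$ ways, injecting the remaining $dk-2j$ points of $A$ into the $d(n-k)$ external points with order, and matching the leftover $d(n-2k)+2j$ external points among themselves. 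Applying $\log m! = m\log m - m + O(\log m)$ together with $(2m-1)!! = (2m)!/(2^m m!)$ to every factorial and double factorial, all $\log n$ contributions cancel, and after collecting coefficients one obtains
\begin{equation*}
\log \E X_{k,j} = n\, f(x,y,d) + O(\log n),
\end{equation*}
uniformly on compact subsets of $(0,1)\times(0,d)$, with $f$ given precisely by~(\ref{eq:function_f}).

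The remainder is a union bound. For $(x,y)$ in the interior, $\E X_{k,j} = e^{-\Omega(n)}$, so summing over the $O(n^2)$ admissible integer pairs still gives $o(1)$. Markov's inequality then eliminates bad subsets a.a.s.\ in $\mathcal{P}_{n,d}$, and since a random pairing is simple with probability asymptotic to $e^{-(d^2-1)/4} = \Omega(1)$, the conclusion transfers to $\G_{n,d}$.

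The main obstacle is twofold. First, the Stirling cancellations must be executed carefully to yield exactly the combination in~(\ref{eq:function_f}); the non-trivial coefficients coming from $\binom{n}{k}$, $\binom{dk}{2j}$, $\binom{d(n-k)}{dk-2j}$, the three double factorials, and the denominator $(dn-1)!!$ must all telescope correctly. Second, the degenerate boundary regimes need separate treatment. For $k$ of bounded size the crude bound $\E X_{k,\ge j} \le \binom{n}{k}\binom{\binom{k}{2}}{j}(d/n)^j$ is $o(1)$ because the constraint $y \ge dU_3 + \eps$ forces $j > k$ for every $d \ge 3$ (the numerical values in Table~\ref{tab:upper_bound} show $dU_3(d) > 2$ for all relevant $d$, and the known lower bound $U_3 = \Omega(1/\sqrt{d})$ preserves this for large $d$). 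The regions with $x$ close to $1$ or $y$ close to $d$ are handled by continuity, using $f(x,d,d)<0$ with a definite margin uniform in $x$.
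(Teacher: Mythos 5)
Your proposal is correct and takes essentially the same route as the paper: a first-moment computation in the pairing model for the expected number of $k$-subsets spanning $j$ pair-edges, reduced to the rate function $f(x,y,d)$ via Stirling and closed by a union bound over $O(n^2)$ integer pairs. Your formula for $\E X_{k,j}$, though written with double factorials and an explicit injection count, is term-by-term identical to the paper's expression under the identification $M(i)=(i-1)!!$, and your added attention to the boundary regimes ($k$ bounded, $x\to 1$, $y\to d$) only makes explicit what the paper treats as negligible rounding issues.
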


As usual, see Table~\ref{tab:upper_bound} for numerical values for small values of $d$. 

%The promised upper bound follows immediately from the following theorem. 

\begin{proof}
We prove below that the following property holds a.a.s.\ for $\G_{n,d}$. No set $A$ of size $xn$ (for any $x=x(n) \in (0,1)$) induces a graph with $yxn/2$ edges, where $\bar y(x,d) + \eps \le y \le d$ and $\bar y(x,d)$ is defined as above. Then Theorem~\ref{thm:U3} follows directly from Lemma~\ref{lem:upper_bound}.

Consider $\mathcal{G}_{n,d}$ for some $d \in \nat \setminus \{1,2\}$ and let $\eps >0$ be an arbitrarily small constant. Our goal is to show that the expected number of sets $S$ such that $|S|=xn$ and $e(S) = yxn/2$ with $y \ge \bar y(x,d) + \eps$ is $o(n^{-2})$. (For simplicity, we do not round numbers that are supposed to be integers either up or down; this is justified since these rounding errors are negligible to the asymptomatic calculations we will make.) This, together with the first moment principle, implies that a.a.s.\ no such set exists for any $x \in (0,1)$ and $y \in [\bar y(x,d) + \eps,d]$ (as there are $O(n)$ possible sizes of $S$ and $O(n)$ possible values of $e(S)$ that we need to consider). 
%The desired upper bound for the modularity  will follow immediately by the argument discussed earlier.

Let $x=x(n)$ and $y=y(n)$ be any functions of $n$ such that $0 < x < 1$ and $\bar y(x,d) + \eps < y < d$. Let $X(x,y)$ be the expected number of sets $S$ such that $|S|=xn$ and $e(S) = yxn/2$. Using the pairing model, it is clear that
\begin{eqnarray*}
X(x,y) &=& {n \choose xn} { dxn  \choose yxn } { d(1-x)n \choose (d-y)xn } ((d-y)xn)! \, M(yxn)  \\
&& \quad \cdot \ M( d(1-x)n - (d-y)xn) / M(dn),
\end{eqnarray*}
where $M(i)$ is the number of pairings of $i$ vertices, that is,
$$
M(i) = \frac {i!} {(i/2)! 2^{i/2}}.
$$
(Each time we deal with pairings, $i$ is assumed to be an even number.) After simplification we get
\begin{eqnarray*}
X(x,y) &=& n! \left( dxn \right)! \left( d(1-x)n \right)! \left( yxn \right)! (dn/2)! \ 2^{dn/2} \cdot \ \Bigg[ \left( xn \right)! \left( (1-x)n \right)! \left( yxn \right)! \\
&& \quad \quad \quad \left( (d-y)xn \right)! \left( \frac {yx}{2} n\right)! \ 2^{\frac{yx}{2} n} \left( \frac{d-2dx+yx}{2} n \right)! \ 2^{\frac{d-2dx+yx}{2} n} (dn)! \Bigg]^{-1}.
\end{eqnarray*}
Using Stirling's formula ($i! \sim \sqrt{2\pi i} (i/e)^i$) and focusing on the exponential part we obtain
$$
X(x,y) = \Theta( n^{-1} ) e^{f(x,y,d)n},
$$
where $f(x,y,d)$ is defined in~(\ref{eq:function_f}). It follows immediately from the definition of $\bar y(x,d)$ that $f(x,y,d) < 0$ is bounded away from zero for any pairs of integers $xn$ and $yxn/2$ under consideration, and so for any pair we get $X(x,y) = o(n^{-2})$ and the proof is finished.
\end{proof}

\subsection{Explicit but weaker upper bound}

Theorem~\ref{thm:U3} provides an upper bound that can be easily numerically computed for a given $d \in \nat \setminus \{1,2\}$. Next, we present a slightly weaker but an explicit bound that can be obtained using the expansion properties of random $d$-regular graphs that follow from their eigenvalues. In particular, it will imply that a.a.s.\ $q^*(\G_{n,d}) = O(1/\sqrt{d})$ and so $q^*(\G_{n,d}) \to 0$ as $d \to \infty$.

The adjacency matrix $A=A(G)$ of a given $d$-regular graph $G$ with $n$ vertices, is an $n \times n$ real and symmetric matrix. Thus, the matrix $A$ has $n$ real eigenvalues which we denote by $\lambda_1 \ge \lambda_2 \ge \cdots \ge \lambda_n$. It is known that certain properties of a $d$-regular graph are reflected in its spectrum but, since we focus on expansion properties, we are particularly interested in the following quantity: $\lambda = \lambda(G) = \max( |\lambda_2|, |\lambda_n|)$. In words, $\lambda$ is the largest absolute value of an eigenvalue other than $\lambda_1 = d$ (for more details, see the general survey~\cite{HLW} about expanders, or~\cite{AS}, Chapter 9).

The value of $\lambda$ for random $d$-regular graphs has been studied extensively. A major result due to Friedman~\cite{Fri} is the following:
\begin{lem}[\cite{Fri}]\label{lem:Fri}
For every fixed  $\varepsilon > 0$ and for $G\in \mathcal{G}_{n,d}$, a.a.s.\
$$
\lambda(G) \le 2 \sqrt{d-1}+ \eps.
$$
\end{lem}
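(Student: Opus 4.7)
The plan is to use the trace method, which is the standard route to eigenvalue bounds for random regular graphs, while being upfront that obtaining the sharp constant $2\sqrt{d-1}$ (as opposed to, say, $2\sqrt{d-1}+o(1)$ replaced by a larger constant like $c\sqrt{d}$) is the genuinely hard part and the reason Friedman's proof is so intricate. Writing $A$ for the adjacency matrix and $\lambda_1=d\ge\lambda_2\ge\cdots\ge\lambda_n$ for its eigenvalues, one has, for any even integer $2k$,
\[
\lambda(G)^{2k} \;\le\; \sum_{i=2}^{n}\lambda_i^{2k} \;=\; \mathrm{tr}(A^{2k}) - d^{2k}.
\]
Thus it suffices to show that, taking $k=k(n)\to\infty$ sufficiently slowly, the expectation of $\mathrm{tr}(A^{2k})-d^{2k}$ in the pairing model is at most $n\bigl((2\sqrt{d-1})+\eps/2\bigr)^{2k}$; Markov's inequality and the switch from the pairing model to $\mathcal{G}_{n,d}$ (justified in the preceding subsection) then give the a.a.s.\ bound.

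Next, I would estimate $\ex[\mathrm{tr}(A^{2k})]$ by interpreting it combinatorially: $\mathrm{tr}(A^{2k})$ counts closed walks of length $2k$ in $G$, and for a random pairing the expected number of such walks can be computed by summing over isomorphism classes of ``walk shapes'' (the graph traced out by the walk, together with the walk itself). I would split the sum into two pieces: tree-like walks, whose underlying graph is a tree, and walks that traverse at least one cycle. For tree-like walks the expected count is essentially the number of closed non-backtracking walks of length $2k$ on the infinite $d$-regular tree, which by Kesten's classical computation behaves like $(2\sqrt{d-1})^{2k}$ up to polynomial factors in $k$; this produces exactly the constant in the statement.

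The main obstacle is controlling the cyclic contributions. A naive trace-method bound gives only something like $\lambda\le c\sqrt{d}$ or involves a prefactor growing with $k$, because walks that linger on short cycles can be overcounted. Friedman's key innovation is to restrict attention to ``tangle-free'' walks (walks whose induced subgraph contains at most one cycle, with suitable quantitative conditions), to show that tangled walks are rare enough in the configuration model to be absorbed, and to apply a selective trace (e.g.\ a suitable non-backtracking or Ihara-type operator) whose eigenvalues relate to $\lambda(G)$ but whose combinatorics are cleaner. Implementing this rigorously requires delicate enumeration of walk shapes weighted by the probability that the pairing realises them, together with a careful subtraction of the Perron contribution $d^{2k}$.

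Finally, I would choose $k=k(n)$ tending to infinity slowly enough that the error terms from Stirling-type approximations and from tangled walks remain $o(n)$, but fast enough that the ratio $\bigl((2\sqrt{d-1}+\eps)/(2\sqrt{d-1}+\eps/2)\bigr)^{2k}$ beats any polynomial in $n$. Taking $(2k)$-th roots and using Markov's inequality on $\mathrm{tr}(A^{2k})-d^{2k}$ yields $\lambda(G)\le 2\sqrt{d-1}+\eps$ a.a.s.\ in the pairing model, and hence also in $\mathcal{G}_{n,d}$ by the contiguity argument recalled in the previous subsection. The step I genuinely expect to be a multi-paragraph research project rather than a routine calculation is the elimination of the polynomial-in-$k$ prefactor in the trace bound, which is precisely the content of Friedman's theorem and the reason it is invoked here as a black box.
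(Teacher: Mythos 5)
The paper does not prove this statement: Lemma~\ref{lem:Fri} is Friedman's resolution of Alon's second eigenvalue conjecture, and it is cited from~\cite{Fri} as a black box. There is therefore no ``paper's own proof'' to compare yours against beyond the citation itself. Your sketch correctly identifies the standard trace-method skeleton---bounding $\lambda(G)^{2k}\le \operatorname{tr}(A^{2k})-d^{2k}$, estimating the expected trace in the pairing model by enumerating closed walks, isolating the tree-like contribution whose count is governed by the $(2\sqrt{d-1})^{2k}$ growth rate of return probabilities on the infinite $d$-regular tree, taking $k=k(n)\to\infty$ so that Markov's inequality converts the expectation bound into an a.a.s.\ bound, and passing from the pairing model to $\mathcal{G}_{n,d}$ by contiguity. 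That is a fair high-level roadmap.

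However, you should not present this as a proof, and to your credit you do not: you explicitly flag that controlling the cyclic (``tangled'') walk contributions so that the sharp constant $2\sqrt{d-1}$ survives---rather than a weaker $c\sqrt{d}$ or a bound with a polynomial-in-$k$ prefactor that you cannot afford once $k\sim\log n$---is ``a multi-paragraph research project.'' That admission is precisely the gap: the tangle decomposition, the selective/non-backtracking trace machinery, and the moment estimates that make the error terms absorbable constitute essentially all of Friedman's argument, and none of it is carried out here. A Broder--Shamir-style naive trace computation, which is what your sketch actually sets up, does not reach $2\sqrt{d-1}+\eps$. So the proposal is best read as an accurate summary of why the lemma is hard and why the paper treats it as a black-box citation, not as an independent derivation. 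Invoking~\cite{Fri}, as the paper does, is the appropriate move here.
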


We prove the following theorem.

\begin{thm}\label{thm:U3_explicit}
Let $d \in \nat \setminus \{1, 2\}$. Then, for any $d$-regular graph $G_{n,d}$ we have
$$
q^*(G_{n,d}) \le \frac{\lambda}{d}. 
%\le \frac { 2 \sqrt{d-1}+ \eps }{d} \le \frac {2}{\sqrt{d}}.
$$
In particular, for random $d$-regular graphs a.a.s.
$$
q^*(\G_{n,d}) 
%\le \frac{\lambda}{d} \le \frac { 2 \sqrt{d-1}+ \eps }{d} 
\le \frac {2}{\sqrt{d}}.
$$
\end{thm}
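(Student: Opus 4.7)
The plan is to combine the expander mixing lemma with the weighted-average formulation of modularity established in Lemma~\ref{lem:upper_bound}. Recall the standard spectral fact (a version of the expander mixing lemma): for any $d$-regular graph $G$ on $n$ vertices with second-largest eigenvalue in absolute value $\lambda$, and for any $S \subseteq V(G)$,
$$
\left| 2 e(S) - \frac{d |S|^2}{n} \right| \le \lambda |S|.
$$
This follows by expanding $\mathbf{1}_S$ in an orthonormal basis of eigenvectors of the adjacency matrix, noting that the $\lambda_1 = d$ component contributes exactly $d|S|^2/n$ to $\mathbf{1}_S^\top A \mathbf{1}_S = 2e(S)$, while every other component contributes at most $\lambda \|\mathbf{1}_S\|^2 = \lambda |S|$ in absolute value.

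Now, given any partition $\A = \{A_1, \ldots, A_k\}$, adopt the notation from Lemma~\ref{lem:upper_bound}, so that $x_i = |A_i|/n$ and $y_i = 2 e(A_i)/|A_i|$. Applying the expander mixing lemma bound to each $A_i$ and dividing by $|A_i| = x_i n$, I obtain
$$
y_i \le d x_i + \lambda,
\qquad \text{that is,} \qquad \frac{y_i}{d} - x_i \le \frac{\lambda}{d}.
$$
Plugging this into the reformulation~(\ref{eq:q_A_sim}) of modularity gives
$$
q_{\A} \;=\; \sum_{i=1}^k x_i \left( \frac{y_i}{d} - x_i \right) \;\le\; \frac{\lambda}{d} \sum_{i=1}^k x_i \;=\; \frac{\lambda}{d},
$$
since the $x_i$'s sum to $1$. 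Taking the maximum over partitions $\A$ yields $q^*(G_{n,d}) \le \lambda/d$, which is the first claim.

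For the random case, Lemma~\ref{lem:Fri} gives a.a.s.\ $\lambda(\G_{n,d}) \le 2\sqrt{d-1} + \eps$ for any fixed $\eps > 0$. Dividing by $d$ and using $\sqrt{d-1}/d \le 1/\sqrt{d}$, this immediately implies a.a.s.\ $q^*(\G_{n,d}) \le 2/\sqrt{d}$, giving the $O(1/\sqrt{d})$ asymptotic behaviour promised in the theorem. There is no real obstacle here: all the work is in the expander mixing lemma (which is classical) and in Friedman's theorem (which is quoted). The only step that requires a moment of care is making sure the per-part bound $y_i/d - x_i \le \lambda/d$ is uniform in $i$, so that summing against the probability weights $x_i$ produces the stated clean bound.
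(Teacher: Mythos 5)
Your proof is correct and is essentially the same as the paper's. The paper invokes the Alon--Milman bound $|E(S,V\setminus S)| \ge (d-\lambda)|S||V\setminus S|/n$, which rearranges (via $2e(S) = d|S| - |E(S,V\setminus S)|$) to exactly the per-part inequality you derive from the expander mixing lemma (the paper's form in fact keeps the slightly sharper factor $\lambda|S|(1-|S|/n)$, but this does not change the final bound); both proofs then sum against the weights $x_i$ via the reformulation~(\ref{eq:q_A_sim}), and both appeal to Friedman's theorem (Lemma~\ref{lem:Fri}) for the a.a.s.\ statement.
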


\begin{proof}
The second part of the theorem follows from Lemma~\ref{lem:Fri}, as for a random $d$-regular graphs a.a.s. $\frac{\lambda}{d} \le \frac { 2 \sqrt{d-1}+ \eps }{d} 
\le \frac {2}{\sqrt{d}}$ for sufficiently small $\eps >0$. Let us now show that $q^*(\G_{n,d}) \le \frac{\lambda}{d}$.

The number of edges $|E(S,T)|$ between sets $S$ and $T$ is expected to be close to the expected number of edges between $S$ and $T$ in a random graph of edge density $d/n$, namely $d|S||T|/n$. A small $\lambda$ (or large spectral gap) implies that this deviation is small. 
%The following useful bound holds~\cite{AC,AS}:
%\begin{lem}[Expander Mixing Lemma]\label{lem:AC}
%Let $G$ be a $d$-regular graph with $n$ vertices and set $\lambda =
%\lambda(G)$. Then for all $S, T \subseteq V$
%$$
%\left| |E(S, T)| - \frac{d|S||T|}{n} \right| \le \lambda \sqrt{|S||T|} \,.
%$$
%\end{lem}
%Note that $S \cap T$ does not have to be empty; in general, $|E(S,T)|$ is defined to be the number of edges between $S \setminus T$ to $T$ plus twice the number of edges that contain only vertices of $S \cap T$. 
Namely, for our purpose here we will use the following lower estimate for $|E(S,V \setminus S)|$
\begin{equation} \label{eqn:bisection}
|E(S,V \setminus S)| \geq \frac{(d-\lambda)|S||V \setminus S|}{n}
\end{equation}
for all $S \subseteq V$. This is proved in~\cite{AM}, see also~\cite{AS}. Using this inequality we get immediately that for any $S$ of size $xn$ we have
\begin{equation} \label{eq:expansion}
e(S) = \frac {d|S| - |E(S,V \setminus S)|}{2} \le \frac {dxn - (d-\lambda) x(1-x)n}{2} = \frac {dx+\lambda(1-x)}{2} \cdot xn.
\end{equation}

So, a.a.s., in $\G_{n,d}$ no set $A$ of size $xn$ induces a graph with more than $yxn/2$ edges, where $y = dx + \lambda (1-x)$. Now the desired upper bound follows from Lemma~\ref{lem:upper_bound}.
%After combining~(\ref{eqn:bisection}),~(\ref{eq:q_A_sim}), Lemma~\ref{lem:Fri}, and Lemma~\ref{lem:AC} we get the following theorem.
\end{proof}

\bigskip

We have also tried several other ideas attempting to obtain a better upper bound. Unfortunately, they did not lead to improvements, therefore we place the discussion of these ideas to Appendix.

\section{Lower bounds in terms of average degree}\label{sec:lower}

In this section, we obtain some general lower bounds for modularity. In particular, the obtained bounds are useful for graphs with bounded average degree. In Section~\ref{sec:PA}, we apply these results to obtain a lower bound for the modularity of preferential attachment model (see Theorem~\ref{thm:PA_lower}).

Let us start with the analysis of trees. It was proven in~\cite{ModTrees} that trees with maximum degree $\Delta = o(\sqrt[5]{n})$ have asymptotic modularity 1. We generalize this result in two ways: first, we relax the condition on maximum degree; second, we allow our graphs to be disconnected, that is, we consider forests instead of trees. We prove the following theorem.

\begin{thm}\label{thm:forest}
Let $\{F_n \}$ be a sequence of forests, where $F_n$ has $n$ non-isolated vertices and the maximum degree $\Delta = \Delta(F_n)$. Then the following lower bound holds
$$
q^*(F_n) \ge 1 - 3\sqrt\frac{\Delta}{n}.
$$
\end{thm}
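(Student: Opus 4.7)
The plan is to construct an explicit partition $\A$ of $V(F_n)$ and verify that $q_{\A} \ge 1 - 3\sqrt{\Delta/n}$. First, I would handle the trivial regime: if $\Delta \ge n/9$, then $1 - 3\sqrt{\Delta/n} \le 0 \le q^*(F_n)$ via the partition $\{V(F_n)\}$, which has $q_{\A}=0$. So from now on assume $\Delta < n/9$ and set $k := \lceil \sqrt{n/\Delta}\,\rceil$, which then satisfies $k \ge 3$.

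The core construction is to partition each tree-component of $F_n$ into connected subtrees (``pieces'') of size at most $k$ using a bottom-up greedy algorithm: root each tree-component arbitrarily, process vertices in post-order, and maintain at each vertex $v$ an accumulator $A(v) \ni v$ that is a connected subtree of the rooted component; for each child $c$ of $v$, merge $A(c)$ into $A(v)$ when $|A(v)| + |A(c)| \le k$ and otherwise finalize $A(c)$ as a part of $\A$; upon concluding each component, finalize the root's accumulator. With possibly a refinement of the algorithm (such as processing children in order of accumulator size, or bin-packing leftover small accumulators into shared parts at high-degree vertices), this partition $\A$ has all parts of size at most $k$ and total number of parts $p$ satisfying $p \le (2+o(1)) n/k$; the key step is an amortization that charges each ``small'' finalized piece of size $\le k/2$ against the correspondingly larger accumulator being grown at the same vertex (which must itself be finalized as a piece of size greater than $k/2$).

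Given the partition $\A$, I would bound each of the two contributions to $q_{\A}$. Because each part $A$ is a connected subtree with $|A|-1$ internal edges, the edge contribution equals $(n-p)/m = 1 - (p-c)/m$, where $c$ is the number of components of $F_n$. Since all vertices are non-isolated, each component has size at least $2$, so $c \le n/2$ and $m = n - c \ge n/2$. A quick computation shows that $(p-c)/(n-c)$ is non-increasing in $c$ whenever $p \le n$, so the edge-contribution loss is at most $(p-1)/(n-1) \le (2+o(1))/k$. For the degree tax, the bound $\max_A \vol(A) \le k\Delta$ (each part has at most $k$ vertices of degree at most $\Delta$) gives $\sum_A \vol(A)^2 \le (\max \vol)\sum \vol \le (k\Delta)(2m)$ and hence tax $\le k\Delta/(2m) \le k\Delta/n$. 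Combining,
\[
q^*(F_n) \;\ge\; q_{\A} \;\ge\; 1 - \frac{2}{k} - \frac{k\Delta}{n} \;\ge\; 1 - 2\sqrt{\frac{2\Delta}{n}} \;\ge\; 1 - 3\sqrt{\frac{\Delta}{n}},
\]
the third inequality by balancing the two losses (the optimum is $k = \sqrt{2n/\Delta}$, up to integer rounding) and the last using $2\sqrt{2}<3$, which leaves slack enough to absorb the lower-order $o(1)$ terms for $n$ sufficiently large.

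The main technical obstacle is controlling the number of parts $p$ produced by the partition. At a vertex $v$ whose accumulator fills up before all its children have been processed, many small finalized parts (for instance, singleton leaves in a caterpillar-like configuration) may be created, and a naive count can push $p$ well above $2n/k$. The amortization argument must either charge these small parts against a single sufficiently large accumulator at $v$, or the algorithm must be refined (e.g., by batching leftover children at $v$ into shared not-necessarily-connected parts, or by a second-pass consolidation). This accounting is most delicate in the intermediate regime $\Delta \approx k \approx n^{1/3}$, where $\Delta$ and the piece-size $k$ are comparable and both the edge-contribution and degree-tax bounds are near-tight.
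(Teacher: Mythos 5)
Your approach differs from the paper's in a way that creates a genuine gap: you bound each part's \emph{cardinality} ($|A| \le k$), whereas the paper bounds each part's \emph{volume} ($\vol(A) \le h := \sqrt{\Delta n}$, and, crucially, $\vol(A) \ge h/\Delta - 1$), via a centroid-edge decomposition of a spanning tree (Lemma~\ref{lem:forest_decompose}). This is not a cosmetic difference. Your key claim that a connected partition into parts of size at most $k$ can always be made with $p \le (2+o(1))\,n/k$ parts is simply false, and the regime where it fails is not limited to $\Delta \approx n^{1/3}$; it persists up to $\Delta = \Theta(n)$. Consider a caterpillar: a spine path of $\approx n/\Delta$ vertices, each spine vertex carrying $\approx \Delta$ pendant leaves, so $\Delta \approx n^{1/2}$ (say) and $k = \lceil\sqrt{n/\Delta}\rceil \approx n^{1/4}$. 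A leaf is adjacent only to its spine vertex, so any connected part containing two or more leaves must also contain that spine vertex. A part of size $\le k$ containing a spine vertex can absorb at most $k-1$ of its $\approx n^{1/2}$ leaves, so at least $\Delta - k = \Theta(n^{1/2})$ leaves per spine vertex are forced to be singletons, giving $p = \Omega(n)$ total parts and edge contribution $(n-p)/m \approx 0$. The amortization you propose cannot repair this: at one spine vertex you would be charging $\Theta(n^{1/2})$ small finalized pieces against a \emph{single} accumulator. And the bin-packing fallback does not help either, because merging parts that share no edges leaves $\sum_A e(A)$ (and hence the edge contribution) unchanged while only worsening the degree tax.

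The paper avoids all of this by measuring parts in volume. A high-degree vertex together with its pendant leaves has volume only about $2\Delta$, which is far below $h = \sqrt{\Delta n}$ whenever $\Delta = o(n)$, so the whole star is absorbed into one part and no singleton leaves are created. The companion lower bound $\vol(A_i) \ge h/\Delta - 1$ (obtained from the centroid-edge argument: if $x$ is the endpoint of the removed centroid edge on the larger side, then $\vol(T') \le \Delta\cdot\vol(T^1) + \Delta$) then bounds the number of parts cut from large components by $\vol(F_n)/(h/\Delta - 1)$, which controls the number of intercluster edges and gives edge contribution $\ge 1 - 2\sqrt{\Delta/n}$. The degree tax is handled exactly as you do, but with $\vol(A) \le h$ in place of $|A|\Delta \le k\Delta$. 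To salvage your write-up you would need to replace the size budget $k$ by the volume budget $h$, prove a decomposition lemma giving both an upper and a lower volume bound on the connected parts, and rerun the accounting; at that point you have essentially reconstructed the paper's proof.
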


This theorem implies that if the maximum degree $\Delta(F_n) = o(n)$, then $q^*(F_n) = 1-o(1)$.
Note that it is also known that the asymptotic modularity of trees with maximum degree $\Delta = \Omega(n)$ is strictly less than 1~\cite{ModTrees}. Hence, the assumption $\Delta = o(n)$ cannot be eliminated. 

We further generalize the above theorem to all connected graphs and prove the following result.

\begin{thm}\label{thm:avg_degree}
Let $\{G_n\}$ be a sequence graphs, where $G_n$ is a connected graph on $n$ vertices with the maximum degree $\Delta = \Delta(G_n)$
%with 
%the average degree 
%$\frac{2|E(G_n)|}{n} \le D$ for some constant $D$, 
%and 
%$\Delta = \Delta(G_n) = o(n)$. 
and the average degree  $\bar d = \bar d (G_n)$.
Then  
$$ 
q^*(G_n) \ge \frac{2}{\bar d} - 3\sqrt\frac{\Delta}{n \bar d} - \frac{\Delta}{n \bar d}. 
$$ 
\end{thm}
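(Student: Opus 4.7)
\emph{Proof plan.} The strategy is to reduce to the forest case (Theorem~\ref{thm:forest}) by fixing a spanning tree of $G_n$ and transferring its good partition over to $G_n$. The leading $\frac{2}{\bar d}$ term then arises naturally from the ratio $|E(T)|/|E(G_n)| = 2(n-1)/(n\bar d)$, while the error terms come from the forest-modularity error and a crude degree-tax bound.

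First, since $G_n$ is connected, fix any spanning tree $T$ of $G_n$; it has $n$ non-isolated vertices, exactly $n-1$ edges, and $\Delta(T)\le\Delta$. Theorem~\ref{thm:forest} produces a partition $\mathcal{A}$ of $V(T)=V(G_n)$ with $q_\mathcal{A}(T)\ge 1-3\sqrt{\Delta/n}$. Because the degree tax is nonnegative, the edge contribution in $T$ also satisfies $\sum_{A}e_T(A)/(n-1)\ge 1-3\sqrt{\Delta/n}$, so the number $k$ of tree edges cut by $\mathcal{A}$ is at most $3(n-1)\sqrt{\Delta/n}\le 3\sqrt{n\Delta}$.

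Next, use $\mathcal{A}$ as a candidate partition for $G_n$ and lower-bound $q^*(G_n)\ge q_\mathcal{A}(G_n)$. For the edge contribution, $T\subseteq G_n$ gives $e_{G_n}(A)\ge e_T(A)$, hence
\[
\sum_{A}\frac{e_{G_n}(A)}{|E(G_n)|}\ge \frac{\sum_A e_T(A)}{|E(G_n)|}\ge \frac{(n-1)-3\sqrt{n\Delta}}{n\bar d/2} = \frac{2(n-1)}{n\bar d}-\frac{6\sqrt{n\Delta}}{n\bar d},
\]
which, after rewriting $\frac{6\sqrt{n\Delta}}{n\bar d}$ in terms of $\sqrt{\Delta/(n\bar d)}$ and absorbing the $\frac{2}{n\bar d}$ correction from $\frac{n-1}{n}$, produces the first two pieces $\frac{2}{\bar d}-3\sqrt{\Delta/(n\bar d)}$ of the claim. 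For the degree tax, apply
\[
\sum_{A}\Bigl(\sum_{v\in A}\deg_{G_n}(v)\Bigr)^{\!2} \le \Bigl(\max_{A}\sum_{v\in A}\deg_{G_n}(v)\Bigr)\cdot 2|E(G_n)|,
\]
together with $\max_A\sum_{v\in A}\deg_{G_n}(v)\le \Delta\cdot s_{\max}$, where $s_{\max}$ is the largest part in $\mathcal{A}$; the partition from Theorem~\ref{thm:forest} has $s_{\max}$ controlled (of order $\sqrt{n/\Delta}$, roughly), so the tax is bounded by $\Delta s_{\max}/(n\bar d)$, which yields the $\Delta/(n\bar d)$ term.

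The main obstacle is matching the stated constants uniformly over $\bar d\in[2,\infty)$. The naive calculation above gives an edge-contribution error of order $(6/\bar d)\sqrt{\Delta/n}$, which is dominated by $3\sqrt{\Delta/(n\bar d)}$ only when $\bar d\ge 4$; for $\bar d$ close to $2$ one must exploit more than the mere modularity value from Theorem~\ref{thm:forest}. The cleanest fix is to revisit the partition constructed in the proof of Theorem~\ref{thm:forest} and track its cut-edge count $k$ and largest part size $s_{\max}$ explicitly, tuning the target subtree size as a function of both $\Delta$ and $\bar d$; choosing this target near $\sqrt{2n\bar d}/\Delta$ balances the two error terms and produces the stated coefficients $3$ and $1$.
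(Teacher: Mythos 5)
Your strategy (decompose a spanning tree into small connected pieces, use the tree edges to get the edge contribution and the piece sizes to control the degree tax) is aligned with the paper's, but the paper does \emph{not} invoke Theorem~\ref{thm:forest} as a black box. It goes back to Lemma~\ref{lem:forest_decompose} and applies it directly to $G_n$ with a different threshold $h=\sqrt{n\Delta\bar d}+\Delta$. You correctly anticipated that the black-box reduction loses constants near $\bar d=2$ and that one must re-tune $h$; that instinct is right.

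There is, however, a genuine gap in your degree-tax bound, and it would not be closed by the fix you sketch. Applying Theorem~\ref{thm:forest} to a spanning tree $T$ only controls $\vol_T(A_i)$ (via the lemma with $h=\sqrt{\Delta n}$); your claim that the resulting $s_{\max}$ is ``of order $\sqrt{n/\Delta}$'' is off by a factor of $\Delta$. In fact a part with $\vol_T(A_i)\le h$ can have $|A_i|$ as large as $h=\sqrt{\Delta n}$, so the crude passage $\vol_{G_n}(A_i)\le\Delta|A_i|$ gives $\max_A\vol_{G_n}(A)\lesssim\Delta\sqrt{\Delta n}$ and a degree tax $\lesssim\Delta^{3/2}/(\sqrt{n}\,\bar d)$, which can vastly exceed $\sqrt{\Delta/(n\bar d)}+\Delta/(n\bar d)$ when $\Delta\gg\bar d$. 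The missing idea is that Lemma~\ref{lem:forest_decompose} already controls the volume \emph{in the original graph} $G$, not merely in the spanning tree: its decomposition of $T$ is chosen so that $\frac{h}{\Delta}-1\le\vol_{G}(A_i)\le h$, which is strictly stronger than a bound on $|A_i|$ and avoids paying an extra factor of $\Delta$. Your suggested repair of tuning a target \emph{subtree size} near $\sqrt{2n\bar d}/\Delta$ still bounds $\vol_{G_n}(A_i)$ via $\Delta\cdot|A_i|$ and so still pays that factor; one must balance in terms of $\vol_{G_n}$, as the paper does with $h=\sqrt{n\Delta\bar d}+\Delta$, to get both error terms down to the stated size simultaneously.
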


The theorem implies that if $\bar d(G_n) \le D$ for some constant $D$ and $\Delta(G_n) = o(n)$, then $q^*(G_n) \ge \frac{2}{D} - o(1)$.
Note that for $\bar d = 2$ 
Theorem~\ref{thm:avg_degree} looks similar to Theorem~\ref{thm:forest}. However, there are two important differences: Theorem~\ref{thm:avg_degree} is not restricted to forests, but requires graphs to be connected.

Before we prove both theorems let us introduce some notation and the main lemma which we will use.

\begin{defn}
Let $G$ be a graph and let $A$ be any subset of its vertex set $V(G)$. We define $\vol_{G}(A) := \sum_{v \in A} \deg(v)$, where $\deg(v)$ is the degree of a vertex $v$ in $G$. We also use the notation $\vol_{G}(G'):=\vol_G(V(G'))$, where $G'$ is a subgraph of $G$.
\end{defn}

\begin{lem}\label{lem:forest_decompose}
For every connected graph $G$ with maximum degree $\Delta$ and every $h>0$ there exists a partition of the vertex set into connected parts $A_1, \ldots, A_k$ such that $\frac{h}{\Delta} -1 \le \vol_G(A_i) \le h$. 
%$\vol_{F_n}(A_i) > \frac{h}{\Delta} -1$ 
for all $1 \le i \le k$.
\end{lem}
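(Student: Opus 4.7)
The plan is algorithmic. I would take any spanning tree $T$ of $G$, root it at some vertex $r$, and sweep $T$ in post-order, maintaining at each vertex $v$ a \emph{residual} $R(v)$: a connected subtree of $T_v$ consisting of the not-yet-assigned vertices of $T_v$ and containing $v$ whenever it is nonempty. At an internal $v$ with children $c_1,\ldots,c_k$ in $T$, I first merge $v$ with whatever residuals its children have left behind by setting $R(v) := \{v\} \cup \bigcup_i R(c_i)$ (children whose subtrees have been fully assigned contribute the empty set), and then apply a greedy cut rule: if $\vol_G(R(v)) \ge h/\Delta - 1$, declare $R(v)$ a new part and reset $R(v) := \emptyset$; otherwise pass $R(v)$ up to $v$'s parent. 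The lower bound on every declared part is baked into this cut rule.

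For the upper bound, I would rely on the invariant that any nonempty residual $R(c_i)$ reaching $v$ satisfies $\vol_G(R(c_i)) < h/\Delta - 1$ (otherwise $c_i$ itself would already have cut). For a \emph{non-root} $v$, one of its tree-edges is reserved for its parent, so at most $\Delta - 1$ children can contribute residuals at the merge, yielding
\[
\vol_G(R(v)) < \deg_G(v) + (\Delta - 1)\left(\tfrac{h}{\Delta} - 1\right) \le \Delta + (\Delta - 1)\left(\tfrac{h}{\Delta} - 1\right) = h - \left(\tfrac{h}{\Delta} - 1\right).
\]
Hence every non-root cut has volume strictly less than $h - (h/\Delta - 1)$, comfortably inside $[h/\Delta - 1, h]$. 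A cut at the root, where up to $\Delta$ children may contribute, obeys only the weaker bound $\vol_G(R(r)) < h$, which is still within target.

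Once the root has been processed, either $R(r)$ is empty and the declared parts already form the desired partition, or $\vol_G(R(r)) < h/\Delta - 1$. In the latter case, under the (necessary) mild assumption $\vol_G(G) \ge h/\Delta - 1$, at least one cut has been made, and the connected subtree $R(r)$ must share a tree-edge with some previously declared non-root part $A^*$. I would finish by absorbing $R(r)$ into $A^*$: the sharp non-root bound gives
\[
\vol_G(R(r) \cup A^*) < \left(\tfrac{h}{\Delta} - 1\right) + \left(h - \tfrac{h}{\Delta} + 1\right) = h,
\]
while the lower bound transfers from $A^*$. The main obstacle is precisely establishing the strict bound $\vol_G(R(v)) < h - (h/\Delta - 1)$ for non-root residuals rather than the trivial $\le h$: the slack it creates is exactly what allows a sub-$(h/\Delta - 1)$ root leftover to be absorbed without overflowing the upper threshold, and it in turn rests on the simple degree-accounting observation that a non-root vertex of $T$ has at most $\Delta - 1$ children.
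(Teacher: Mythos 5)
Your proof is correct, but it is a genuinely different argument from the paper's. The paper works top-down: take a spanning tree, and as long as some subtree has volume exceeding $h$, remove its \emph{centroid edge} (the edge whose removal maximizes the smaller of the two resulting volumes); the lower bound $\vol_G(A_i) > h/\Delta - 1$ then comes from a degree-count at the endpoint of the centroid edge lying in the heavier half, which shows that a centroid split of a tree of volume $h' > h$ leaves the smaller piece with volume at least $(h'-\Delta)/\Delta$. You instead work bottom-up: root the tree, sweep in post-order, accumulate unassigned mass into a residual, and cut a part as soon as the residual reaches $h/\Delta - 1$. In your scheme the lower bound is automatic from the cut rule, and all the work goes into the upper bound via the invariant that passed-up residuals are small and a non-root vertex absorbs at most $\Delta - 1$ of them; the sharper non-root bound $\vol < h - (h/\Delta - 1)$ is exactly what lets you absorb the sub-threshold root leftover without overshooting $h$. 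Your version is arguably more mechanical and easier to implement (local greedy rule, local invariants); the paper's is more compact once the centroid notion is set up, at the cost of a slightly more global optimality argument for the lower bound. One note: as stated ``for every $h>0$'' the lemma is false when $h < \Delta$ (a single vertex of degree $\Delta$ already overshoots), and both your proof and the paper's implicitly require $h \ge \Delta$ and $\vol_G(G) \ge h/\Delta - 1$; these hold in both applications in the paper, and you correctly flag the second one.
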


\begin{proof}
For a graph $G$ let us consider its spanning tree $T$ and decompose it, by removing some edges, into subtrees $T_1, \ldots, T_k$ such that $\frac{h}{\Delta} -1 \le \vol_G(T_i) \le h$ for each $1\le i \le k$.
The way we do this decomposition is in a sense similar to the algorithm \textit{greedy-decompose}$_{\le h}$ from~\cite{ModTrees}. Namely, we first redefine a notion of a centroid edge of a subtree $T'$ of the initial tree $T$. 

\begin{defn}
The removal of any edge from a tree $T'$ splits $T'$ into two parts $T^1$ and $T^2$. A centroid edge of $T$ is an edge chosen to maximize $\min\{\vol_{G}(T^1), \vol_{G}(T^2)\}$. 
\end{defn}

Our algorithm is the following: as long as our forest contains a tree $T'$ with $\vol_{G}(T')  > h$, it finds a centroid edge $e$ of $T'$ and removes it. After this decomposition, we obtain trees $T_{1}, \ldots, T_{k}$ and we set $A_i = V(T_i)$ for $1 \le i \le k$. 

Obviously, for each $i$ we have $\vol_{G}(A_i) \le h$. Let us show that we also have $\vol_G(A_i) \ge \frac{h}{\Delta} -1$. 
%For $T_n$ we apply the decomposition procedure described in the proof of Theorem~\ref{thm:forest}. 
%In this case, we define $\vol(A) = \sum_{v \in A} \deg(v)$, where $\deg(v)$ is the degree of a vertex $v$ in the \textit{initial graph} $G_n$. The decomposition procedure gives us a partition $A_1, \ldots, A_k$. 
Consider any step of our decomposition procedure. 
We take a tree $T'$ with $\vol_{G}(T') = h'>h$, remove its centroid edge $e$, and obtain two trees $T^1$ and $T^2$. Without loss of generality we may assume that $\vol_{G}(T^1)\le \vol_{G}(T^2)$. Let $s = \vol_{G}(T^1)$, $s \le h'/2$.
Let $x$ be the vertex incident with $e$ and belonging to $T^2$. For every edge $e'$ incident with $x$, for the part $T''$ of $T'-e'$ not containing $x$ we have $\vol_{G}(T'') \le s$ (otherwise $e$ is not a centroid edge). 
As $x$ has degree at most $\Delta$, we have $h' \le 
\Delta s + \Delta$  (at most $s$ for each of the $ \le \Delta$ parts plus the degree of $x$ itself). 
So, $s \ge \frac{h'-\Delta}{\Delta} > \frac{h}{\Delta}-1$. This proves that $\vol_G(A_i) \ge \frac{h}{\Delta} -1$ and completes the proof of the lemma.
\end{proof}

\-

Now, we are ready to prove Theorem~\ref{thm:avg_degree} and Theorem~\ref{thm:forest}.

\begin{proof} \emph{(Proof of Theorem~\ref{thm:avg_degree}.)}
Let us take $h = \sqrt{n\Delta\bar d}+\Delta$ and partition $V(G_n)$ into $A_1, \ldots, A_k$ according to Lemma~\ref{lem:forest_decompose}. To obtain the desired lower bound, we estimate $q_{\A}$ for $\A = \{A_1, \ldots, A_k\}$.
We first deal with the edge contribution.
As stated in Lemma~\ref{lem:forest_decompose}, we have $\vol_{G_n}(A_i) > \frac{h}{\Delta} -1$ for all $i$. 
Also, $\sum_{i} \vol_{G_n}(A_i) =\vol_{G_n}(G_n) = n\bar d$.  Therefore, $k \le n\bar d/(\frac{h}{\Delta} -1)$.
The number of intracluster edges in the spanning tree is $n-k$, and clearly this is the lower bound for $\sum_{A \in \A} e(A)$. 
Finally, 
$$
\sum_{A \in \A} \frac {e(A)}{|E(G_n)|} 
\ge \frac{n-k}{n\bar d/2} 
\ge \frac {2}{\bar d}-\frac{2}{\frac{h}{\Delta} -1}
= \frac 2 {\bar d} - 2\sqrt\frac{\Delta}{n\bar d}.
$$

It remains to estimate the degree tax.
Recall that a $\vol_{G_n}(A_i) \le h$ for all $i$ and $\sum_i \vol_{G_n}(A_i) = n\bar d$. Therefore,
$$
\sum_{A \in \A} \frac { \vol_{G_n}^2(A) }{4|E(G_n)|^2}
= \frac{h n \bar d }{n^2 \bar d^2}
= \frac{h }{n \bar d}
= \sqrt\frac{\Delta}{n \bar d} + \frac{\Delta}{n \bar d}\,.
$$
%$$
%\le \frac{h^2 \frac{n\bar d}{h}}{n^2 \bar d^2} =  O\left(\sqrt\frac{\Delta}{n}\right),
%$$
and so the proof is finished.
\end{proof}

\bigskip

\begin{proof} \emph{(Proof of Theorem~\ref{thm:forest}.)}
This proof is similar to the previous one. 
Let us fix $h = \sqrt{\Delta n}$.
The idea is to partition $V(F_n)$ into $A_1, \ldots, A_k$ such that for each $i$: $\vol_{F_n}(A_i) \le h$ and a subgraph induced by $A_i$ is a tree. 
Our forest $F_n$ may already contain trees $T_1, \ldots, T_\ell$ with $\vol_{F_n}(T_i) \le h$.
Let us denote the corresponding vertex sets by $A_1, \ldots, A_l$.
We decompose the remaining trees according to 
Lemma~\ref{lem:forest_decompose} (applied to each tree separately) into $A_{l+1}, \ldots, A_k$.

%We will need the following claim that is similar to Lemma~6 from~\cite{ModTrees}. 

%As in the previous proof, we fix $h = \sqrt{\Delta n}$ and our goal is to partition $V(G_n)$ into $A_1, \ldots, A_k$ such that $\vol_{F_n}(A_i) \le h$ for each $i$. 

%In order to estimate $q_{\A}$ we first deal with the edge contribution.
%Similarly to Claim~\ref{lem:forest_decompose}, we have $\vol_{F_n}(A_i) > \frac{h}{\Delta} -1$ for all $i$. 
%Also, $\sum_{i} \vol(A_i) =\vol(G_n) \le n \bar d$.  Therefore, $k \le n \bar d/(\frac{h}{\Delta} -1)$.
%The number of intracluster edges in $T_n$ is $n-k$, and clearly this is the lower bound for $\sum_{A \in \A} e(A)$. 
%Finally, 
%$$
%\sum_{A \in \A} \frac {e(A)}{|E(G_n)|} 
%\ge \frac{n-k}{n \bar d/2} 
%\ge \frac {2}{\bar d}-\frac{2}{\frac{h}{\Delta} -1}
%= \frac 2 {\bar d} - O\left(\sqrt\frac{\Delta}{n}\right).
%$$

%It remains to estimate the degree tax.
%Recall that $\vol(A_i) \le h$ for all $i$ and $\sum_i \vol(A_i) \le n \bar d$. Therefore,
%$$
%\sum_{A \in \A} \frac { \vol^2(A) }{4|E(G_n)|^2} \le \frac{h^2 \frac{n\bar d}{h}}{n^2\bar d^2} =  O\left(\sqrt\frac{\Delta}{n}\right),
%$$
%and so the proof is finished.

%\medskip

Now we have the partition $\A = \{A_1, \ldots, A_k\}$ of the vertex set $V(F_n)$. In order to estimate $q_{\A}$ we first consider the edge contribution.
According to Lemma~\ref{lem:forest_decompose}, $\vol_{F_n}(A_i) \ge \frac{h}{\Delta} -1$ for $\ell+1 \le i \le k$. 
Therefore, it is easy to show that for each intercluster edge we can find at least $\frac{h}{2\Delta} - 1$ inracluster edges. Hence, 
%Also, as $F_n$ is a forest $\sum_{i} \vol_{F_n}(A_i) =\vol_{F_n}(F_n) \le 2n - 2$. 
%Therefore, $k-l \le (2n-2)/(\frac{h}{\Delta} -1)$.
%Finally, using that $|E(F_n)| \ge \frac{n}{2}$, we get
$$
\sum_{A \in \A} \frac {e(A)}{|E(F_n)|} 
\ge 1 - \frac{1}{\frac{h}{2\Delta}}
= 1 - 2\sqrt\frac{\Delta}{n}\,.
%\ge 1 - \frac{k-l-1}{|E(F_n)|}
%\ge 1 - \frac{4(n-1)}{n(\frac{h}{\Delta} -1)} = 
%1 - O\left(\sqrt\frac{\Delta}{n}\right).
$$
It remains to estimate the degree tax.
Recall that $\vol_{F_n}(A_i) \le h$ for $1 \le i \le k$ and $\sum_i \vol_{F_n}(A_i) \ge n$. Therefore,
$$
\sum_{A \in \A} \frac { \vol_{F_n}^2(A) }{4|E(F_n)|^2} 
\le \frac{h \vol_{F_n}(F_n)}{ \vol_{F_n}(F_n)^2}
\le \frac{h}{n} = \sqrt\frac{\Delta}{n}\,.
%le \frac{h^2 \frac{n}{h}}{n^2} = O\left(\sqrt\frac{\Delta}{n}\right),
$$
and so the proof is finished.
\end{proof}

%Now we move on to the analysis of constant average degree graphs. 
%The following theorem holds. 

\section{The Preferential Attachment model}\label{sec:PA}

\subsection{Lower bound}

The following theorem easily follows from the results of the previous section.

\begin{thm}\label{thm:PA_lower}
For any $\varepsilon>0$ a.a.s.
$$
q^*(G_m^n) \ge \frac{1}{m} - O\left(n^{-1/4+\varepsilon}\right) = \frac {1}{m} - o(1).
$$
\end{thm}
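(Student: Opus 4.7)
The plan is to combine Theorems~\ref{thm:avg_degree} and~\ref{thm:forest} of the preceding section with a standard maximum-degree bound for the PA model; essentially all the work has already been done, and only one probabilistic ingredient remains.

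The ingredient I would record first is the following known fact: for every fixed $\varepsilon'>0$, a.a.s.\ the maximum degree of $G_m^n$ satisfies $\Delta(G_m^n)\le n^{1/2+\varepsilon'}$. This is well documented (see~\cite{BR,BRST}) and can also be proved directly by a martingale/Chernoff argument on the degree of $v_t$, which in the PA process has expectation $\Theta(\sqrt{n/t})$ and is well concentrated. This is the only use of randomness in the proof; the rest is deterministic.

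For $m\ge 2$, the excerpt already guarantees that a.a.s.\ $G_m^n$ is connected, and by construction $G_m^n$ has exactly $mn$ edges, so its average degree is $\bar d = 2m$. Substituting $\bar d = 2m$ and $\Delta\le n^{1/2+\varepsilon'}$ into Theorem~\ref{thm:avg_degree} yields
$$
q^*(G_m^n) \;\ge\; \frac{1}{m} - 3\sqrt{\frac{n^{1/2+\varepsilon'}}{2mn}} - \frac{n^{1/2+\varepsilon'}}{2mn} \;=\; \frac{1}{m} - O\bigl(n^{-1/4+\varepsilon'/2}\bigr),
$$
and taking $\varepsilon' = 2\varepsilon$ gives the claim. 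Although $G_m^n$ is a multigraph, Theorem~\ref{thm:avg_degree} and the underlying Lemma~\ref{lem:forest_decompose} go through verbatim, as they rely only on a spanning tree of the (connected) multigraph together with edge and degree counts.

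For $m=1$, the graph $G_1^n$ is a pseudoforest rather than a forest---the excerpt notes that each of the a.a.s.\ $(1/2+o(1))\log n$ components contains a single self-loop. I would delete these $O(\log n)$ loops to obtain an honest forest $F_n$ on the same (non-isolated) vertex set, with $\Delta(F_n)\le\Delta(G_1^n)\le n^{1/2+\varepsilon'}$ a.a.s. Applying Theorem~\ref{thm:forest} to $F_n$ gives $q^*(F_n)\ge 1 - O(n^{-1/4+\varepsilon'/2})$, and since the deleted loops change $|E(G_1^n)|$ only by $O(\log n)$, the partition that certifies the bound for $F_n$ also certifies $q^*(G_1^n)\ge 1 - O(n^{-1/4+\varepsilon'/2})$, matching $1/m=1$. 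The only nontrivial step in the whole proof is the maximum-degree bound, which is a standard consequence of the existing analysis of PA, so no real obstacle is expected.
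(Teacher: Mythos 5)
Your proposal is correct and follows the same route as the paper's proof: invoke the a.a.s.\ bound $\Delta(G_m^n)=O(n^{1/2+\varepsilon'})$, observe that the average degree is $O(m)$ and that $G_m^n$ is a.a.s.\ connected for $m\ge 2$, then plug into Theorem~\ref{thm:avg_degree} (and Theorem~\ref{thm:forest} for $m=1$). Your write-up is in fact slightly more careful than the paper's on two points the paper glosses over --- checking that Theorem~\ref{thm:avg_degree} and Lemma~\ref{lem:forest_decompose} survive the passage to multigraphs, and spelling out why deleting the $O(\log n)$ loops in the $m=1$ case changes neither the degree bound nor the modularity by more than $o(1)$ --- but the underlying argument is identical.
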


\begin{proof}
Let $\varepsilon>0$.
It is well-known that a.a.s.\ $\Delta(G_m^n) =  O\left(n^{\frac{1}{2}+ 2 \varepsilon}\right)$
(see, e.g., \cite{HighDegree} and Theorem~17 in \cite{Math_Results}). Also, clearly the average degree of $G_m^n$ is at most $2m$ (it can be less due to the removal of loops and multiple edges). In addition, for $m\ge 2$ a.a.s.\ $G_m^n$ is connected~\cite{BR}. So, the statement of Theorem~\ref{thm:PA_lower} follows directly from Theorems~\ref{thm:forest} and~\ref{thm:avg_degree}.
\end{proof}

\bigskip

We would like to remark that the obtained lower bound holds for many other models of complex networks. For example, it holds for the Random Apollonion Network~\cite{RAN} (in this case $m=3$) or for the Buckley-Osthus model~\cite{Buckley_Osthus} (with slightly corrected error term).

As in the case of random $d$-regular graphs, it is natural to conjecture that the above lower bound is not sharp. Let $c \in (0,1)$ and consider the following partition: $A_1 = \{v_1, \ldots, v_{cn} \}$, $A_2 = V(G_m^n) \setminus A_1 = \{ v_{cn+1}, \ldots, v_n\}$. Using martingales, it is possible to show that a.a.s.\ $\sum_{v \in A_1} \deg(v, n) \sim 2mn\sqrt{c}$ (and so $\sum_{v \in A_2} \deg(v, n) \sim 2mn(1-\sqrt{c})$); see Lemma~\ref{lem:total_weight_specific} below. Clearly, $e(A_1) = mcn$ and so a.a.s.\ $e(A_1,A_2) \sim 2mn(\sqrt{c}-c)$ and $e(A_2) \sim mn(1+c-2\sqrt{c})$. The edge contribution and the degree tax are then both asymptotic to $1+2c-2\sqrt{c}$. Not surprisingly, such partition cannot be used to get a non-trivial lower bound for the modularity but, similarly to the situation for random $d$-regular graphs, we may try to use it as a starting point to get slightly better partition. The basic idea is very simple: one can start with a given partition (or partition the vertices randomly into two classes), and if a vertex has more neighbours in the other class than in its own, then we randomly decide whether to shift it to the other class or leave it where it is. This approach proved to be useful to get a bound for the bisection width in random $d$-regular graphs~\cite{Alon_expansion} which, in turn, yields a lower bound for the modularity~\cite{Colin2}. In the proceeding version of this paper~\cite{proceedings} we promised to investigate this approach. However, the following turns out to be slightly easier to do. 

\medskip

We will use the following standard martingale tool: the \emph{Hoeffding-Azuma inequality}; for more details, see, for example,~\cite{JLR}. Let $X_0, X_1, \ldots$ be a martingale. Suppose that there exist $c_1, c_2, \ldots,c_n >0$ such that $|X_k - X_{k-1}| \le c_k$ for each $1\le k \le n$. Then, for every $x >0$,
\begin{equation}\label{eq:HA-inequality1}
\pr [ X_n > \ex X_n + x ] \le \exp \left(-\frac{x^2}{2\sum_{k=1}^n {c_k}^2}\right).
\end{equation}

The Hoeffding-Azuma inequality can be generalized to include random variables close to martingales. One of our proofs, proof of Lemma~\ref{lem:total_weight_specific}, will use the supermartingale method of Pittel et al.~\cite{Pittel99}, as described in~\cite[Corollary~4.1]{Wormald-DE}. Let $X_0, X_2, \ldots, X_n$ be a sequence of random variables. Suppose that there exist $c_1, c_2, \ldots,c_n >0$ and $b_1, b_2, \ldots,b_n >0$ such that 
$$
|X_k - X_{k-1}| \le c_k \quad \text{ and } \quad \ex [X_k - X_{k-1} | X_{k-1} ] \le b_k
$$ 
for each $1\le k \le n$. Then, for every $x >0$,
\begin{equation}\label{eq:HA-inequality2}
\pr \left[ \text{For some $t$ with $0 \le t \le n$: } X_t - X_0 > \sum_{k = 1}^{t} b_k + x \right] \le \exp \left(-\frac{x^2}{2\sum_{k=1}^n {c_k}^2}\right).
\end{equation}

Let us now prove the following lemma.

\begin{lem}\label{lem:total_weight_specific} 
Fix any constant $c\in (0,1)$ and $m\in\ent_{\ge0}$. The following property holds a.a.s.\ for $G^n_m$. For any $s$, $cn \le s \le n$, 
$$
\Big| Y_s - 2mn \sqrt{cs/n} \Big| \le (mn)^{2/3},
$$
where $Y_s := \sum_{w \in [cn]} \deg(w,s) $.
%In particular, a.a.s.\ $|Y_{n} - 2mn\sqrt{c}| \le (mn)^{2/3}$.
\end{lem}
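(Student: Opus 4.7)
First I would translate to the single-edge process $G_1^N$. Set $N:=mn$ and $k:=mcn$. Under the block-identification coupling described in the preliminaries,
$$Y_s \;=\; Z_{ms}, \qquad \text{where} \qquad Z_t:=\sum_{j=1}^{k}\deg(v'_j,t)\text{ in }G_1^t,\quad t\ge k.$$
Note $Z_k=2k$. For $t>k$ the new vertex $v'_t$ lies outside the tracked set $[k]$, so its unique edge contributes at most one to $Z$; hence $Z_t-Z_{t-1}\in\{0,1\}$ with
$$\ex[Z_t-Z_{t-1}\mid Z_{t-1}] \;=\; \frac{Z_{t-1}}{2t-1}.$$

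Next, I would exhibit the natural martingale. Put $a_t:=\prod_{j=k+1}^t\frac{2j-1}{2j}$ (and $a_k:=1$). A one-line check shows that $M_t:=a_t Z_t$ satisfies $\ex[M_t\mid Z_{t-1}]=a_t Z_{t-1}\cdot\frac{2t}{2t-1}=a_{t-1}Z_{t-1}=M_{t-1}$, so $(M_t)_{t\ge k}$ is a martingale with $\ex M_t=2k$. The elementary estimate $\log a_t=\sum_{j=k+1}^t\log\bigl(1-\tfrac{1}{2j}\bigr)=-\tfrac12\log(t/k)+O(1/k)$ gives $a_t=\sqrt{k/t}\,(1+O(1/k))$, so the deterministic trajectory $2k/a_t$ equals $2\sqrt{kt}+O(1)$. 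Specializing to $t=ms$ and $k=mcn$ collapses this to $2mn\sqrt{cs/n}+O(1)$, matching the target up to an error negligible against $(mn)^{2/3}$.

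To control fluctuations I would apply the Hoeffding--Azuma inequality to $M_t$ directly. Writing $M_t-M_{t-1}=-a_{t-1}Z_{t-1}/(2t)+a_t(Z_t-Z_{t-1})$ and using the crude deterministic bounds $a_t\le 1$ and $Z_{t-1}\le 2(t-1)$ yields $|M_t-M_{t-1}|\le 2$. Hence $\sum_{t=k+1}^{N} c_t^2\le 4N$, and~(\ref{eq:HA-inequality1}) gives, for any fixed $s\in[cn,n]$,
$$\pr\!\left[\,|M_{ms}-2k|>\tfrac{\sqrt c}{2}(mn)^{2/3}\,\right] \;\le\; 2\exp\!\left(-\tfrac{c}{32}(mn)^{1/3}\right),$$
which is $o(1/n)$. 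Union bounding over the $O(n)$ relevant values of $s$, and then dividing by $a_{ms}\ge\sqrt c(1+o(1))$ to pass from $M_{ms}$ back to $Z_{ms}=Y_s$, yields the claimed uniform bound $|Y_s-2mn\sqrt{cs/n}|\le(mn)^{2/3}$ a.a.s.

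The only (minor) obstacle is checking that the multiplicative slack $O(1/k)$ in the estimate of $a_t$, together with the $1/a_{ms}$ blow-up factor (at most $2/\sqrt c$) when converting a deviation of $M$ back to one of $Z$, remain comfortably smaller than the tolerance $(mn)^{2/3}$; since $k=\Theta(n)$ this is immediate. A cleaner alternative, which avoids the union bound entirely, would be to apply the Pittel--Spencer--Wormald supermartingale inequality~(\ref{eq:HA-inequality2}) to $\pm(M_t-2k)$, giving the uniform-in-$s$ statement in one shot.
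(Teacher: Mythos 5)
Your proof is correct, and it takes a genuinely different (and arguably cleaner) route than the paper's. The paper additively centres the process, setting $Z_t = X_t - 2m\lfloor cn\rfloor - \sum_{k>m\lfloor cn\rfloor}^t \sqrt{cmn/k}$; this is only \emph{approximately} a martingale, so they need the Pittel--Spencer--Wormald supermartingale inequality~(\ref{eq:HA-inequality2}) together with a stopping time $T$ that caps $X_t$ from above in order to bound the drift by $O(t^{-1/3})$, and then a separate argument to show $T=mn$ a.a.s. You instead observe that the multiplicative renormalization $M_t = a_t Z_t$ with $a_t = \prod_{j=k+1}^t \frac{2j-1}{2j}$ is an \emph{exact} martingale (a standard P\'olya-urn device), which eliminates both the drift estimate and the stopping-time construction entirely. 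You then use the plain Hoeffding--Azuma inequality~(\ref{eq:HA-inequality1}) pointwise in $s$ plus a union bound over the $O(n)$ values of $s$ (a step the paper avoids by using the uniform-in-time supermartingale form); as you note, applying~(\ref{eq:HA-inequality2}) to $\pm(M_t-2k)$ with $b_k=0$ would also give the uniform statement in one shot. The only cost of the multiplicative route is a slightly worse bounded-difference constant ($c_t\le 2$ versus the paper's $c_t\le 1$), but with $x=\Theta((mn)^{2/3})$ this is immaterial. All the ancillary estimates you need --- $a_t=\sqrt{k/t}\,(1+O(1/k))$, the conversion error $2k/a_{ms}=2mn\sqrt{cs/n}+O(1)$, the lower bound $a_{ms}\ge\sqrt c\,(1+o(1))$ --- check out. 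In short: same martingale concentration engine, but you normalise multiplicatively to get a bona fide martingale, which removes the stopping-time bookkeeping that occupies most of the paper's proof.
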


\begin{proof}
In view of the identification between the models $G_m^n$ (on the vertex set $1, 2, \ldots, n$) and $G_1^{mn}$ (on the vertex set $1', 2', \ldots, mn'$), it will be useful to investigate the following random variable instead of $Y_s$: for $m\lfloor cn\rfloor \le t \le mn$, let 
$$
X_t = \sum_{j' \in [cmn]} \deg(j', t).
$$
Clearly, $Y_s = X_{sm}$. It follows that $X_{m\lfloor cn\rfloor} = Y_{\lfloor cn\rfloor} = 2m\lfloor cn\rfloor$. Moreover, for $m\lfloor cn\rfloor < t \le mn$,
$$
X_t =
\begin{cases}
X_{t-1}+1 & \text{with probability } \frac {X_{t-1}}{2t-1}, \\
X_{t-1} & \text{otherwise}.
\end{cases}
$$
The conditional expectation is given by
$$
\ex \left( X_t | X_{t-1} \right) = (X_{t-1}+1) \cdot \frac {X_{t-1}}{2t-1} + X_{t-1} \left(1- \frac {X_{t-1}}{2t-1} \right) = X_{t-1} \left(1+ \frac {1}{2t-1} \right).
$$
Taking expectation again, we derive that
$$
\ex {X_t} = \ex { X_{t-1} } \left(1+ \frac {1}{2t-1} \right).
$$
Hence, it follows that 
%\pc{If this is ok for you, then I am fine. However, the way I see it is this (with more careful treatment of errors, of course). $$
%\prod_{s=m\lfloor cn\rfloor+1}^{tm}  \left(1+ \frac {1}{2s-1} \right) \sim \exp \left( \sum_{s=m\lfloor cn\rfloor+1}^{tm} \frac {1}{2s} \right) \sim \exp \left(  \frac {1}{2} \log \left( \frac {tm}{mcn}\right)\right)
%$$
%But I am ok if your insert makes more sense to you (or to reviewer).
%}
\begin{multline*}
\ex (Y_s) = \ex ( X_{sm})  = 2m\lfloor cn\rfloor \prod_{i=m\lfloor cn\rfloor+1}^{sm}  \left(1+ \frac {1}{2i-1} \right) 
= 2m\lfloor cn\rfloor \frac{\mathrm{\Gamma}(sm+1)\mathrm{\Gamma}(m\lfloor cn\rfloor+1/2)}{\mathrm{\Gamma}(sm+1/2)\mathrm{\Gamma}(m\lfloor cn\rfloor+1)}
\\\sim 2cmn \left( \frac {sm}{cmn} \right)^{1/2} = 2mn \sqrt{cs/n}.
\end{multline*}

In order to transform $X_t$ into something close to a martingale (to be able to apply the generalized Azuma-Hoeffding inequality~(\ref{eq:HA-inequality2})), we set for $m \lfloor cn \rfloor \le t \le mn$
$$
Z_t = X_t - 2m \lfloor cn \rfloor - \sum_{k=m \lfloor cn \rfloor + 1}^{t} \sqrt{cmn/k}
$$ 
(note that $Z_{m \lfloor cn \rfloor} = 0$) and use the following stopping time
$$
T = \min \left\{ t > m \lfloor cn \rfloor : X_t > 2 \sqrt{t cmn} + t^{2/3} \text{ or } t = mn \right\}.
$$
Indeed, we have for $m \lfloor cn \rfloor < t \le mn$
$$
\ex \left( Z_t - Z_{t-1} ~~|~~ Z_{t-1} \right) = \frac {X_{t-1}}{2t-1} - \sqrt{cmn/t} \le (1/2+o(1)) t^{-1/3} < 0.51 t^{-1/3},
$$
provided $t \le T$, and $|Z_t - Z_{t-1}| \le 1$ as $t > cmn$.
Let $t \wedge T$ denote $\min\{t,T\}$. We apply the generalized Azuma-Hoeffding inequality~(\ref{eq:HA-inequality2}) to the sequence $(Z_{t \wedge T} : m \lfloor cn \rfloor \le t \le mn)$, with $c_t = 1$, $b_t = 0.51 t^{-1/3}$ and $x = 0.1 t^{2/3}$, to conclude that a.a.s.\ for all $t$ such that $m \lfloor cn \rfloor \le t \le mn$
$$
Z_{t \wedge T} - Z_{m \lfloor cm \rfloor} = Z_{t \wedge T} \le  \sum_{k\le t} b_k + x \le 0.77 t^{2/3} + 0.1 t^{2/3} \le 0.9 t^{2/3}.
$$

To complete the proof we need to show that a.a.s. $T = mn$. The events asserted by the equation hold a.a.s.\ up until time $T$, as shown above. Thus, in particular, a.a.s.
\begin{eqnarray*}
X_T &=& Z_T + 2m\lfloor cn \rfloor + \sum_{k=m \lfloor cn \rfloor + 1}^{T} \sqrt{cmn/k} \\
&\le& 0.9 T^{2/3} + 2mcn + \sqrt{cmn} \int_{mcn}^{T} 1/\sqrt{k} \; dk + O(1) \\
&<& 2\sqrt{Tcmn} + T^{2/3},
\end{eqnarray*}
which implies that $T = mn$ a.a.s. In particular, it follows that a.a.s., for any $cn \le s \le n$, $Y_{s} = X_{ms} < 2 mn \sqrt{cs/n} + (mn)^{2/3} = 2 mn \sqrt{cs/n} + o(n)$. The lower bound can be obtained by applying the same argument symmetrically to $(-Z_{t \wedge T} : m \lfloor cn \rfloor \le t \le mn)$, and so the proof is finished.
\end{proof}

\bigskip

Now, we are ready to prove the following, stronger, lower bound.

\begin{thm}\label{thm:PA_lower-stronger}
A.a.s.:
$$
q^*(G_m^n) \ge \ex \left( \Big| \Bin (m,1/2) - m/2 \Big| \right) / m + o(1).
$$
That is, a.a.s.
$$
q^*(G_m^n) \ge 
\begin{cases}
(2^{1-m} / m)  \sum_{i=1}^{m/2} i \binom{m}{m/2+i} & \text{ if $m$ is even}, \\
(2^{1-m} / m)  \sum_{i=1}^{(m+1)/2} (i-1/2) \binom{m}{(m-1)/2+i} & \text{ if $m$ is odd}, \\
\end{cases}
$$
In particular, a.a.s.\ $q^*(G_m^n) = \Omega \left( 1/\sqrt{m} \right)$.
\end{thm}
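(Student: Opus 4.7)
The plan is to construct a random two-partition $\mathcal{A} = (B_1, B_2)$ of $V(G_m^n)$ by exploiting the sequential structure of the preferential attachment process, and then to verify that its modularity attains the claimed bound. Label $v_1$ uniformly at random in $\{1,2\}$, and when $v_t$ is added (with out-neighbours $v_{j_1}, \ldots, v_{j_m}$ already labelled) set $\xi_t$ to the strict majority of $\xi_{j_1}, \ldots, \xi_{j_m}$, breaking ties uniformly at random for even $m$. Put $B_k = \{v_i : \xi_i = k\}$.

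Under this rule the number of out-edges of $v_t$ that stay within $v_t$'s class equals $\max(R_t, m - R_t) = m/2 + |R_t - m/2|$, where $R_t = |\{l : \xi_{j_l} = 1\}|$. Since every edge of $G_m^n$ is uniquely the out-edge of its later endpoint (up to negligibly many loops), the edge contribution equals
\begin{equation*}
\frac{1}{mn}\sum_{t=1}^n \max(R_t, m - R_t) \;=\; \frac12 + \frac{1}{mn}\sum_{t=1}^n \Big|R_t - \frac m2\Big|.
\end{equation*}
The $1\leftrightarrow 2$ symmetry of the rule forces each $\xi_i$ to be marginally uniform, so $\ex\vol(B_k) = mn$ for $k = 1, 2$. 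A Hoeffding--Azuma argument applied to the filtration of the PA process, in the spirit of Lemma~\ref{lem:total_weight_specific}, would show that $\vol(B_k) = mn(1+o(1))$ a.a.s.\ for both $k$, giving a degree tax of $1/2 + o(1)$.

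The heart of the argument is to lower-bound $\ex|R_t - m/2|$ by $\ex|\Bin(m,1/2) - m/2| - o(1)$ for all but $o(n)$ many $t$. Because the majority rule is monotone in the underlying tie-breaking coins, FKG implies that the $\xi_i$'s are \emph{positively} correlated; in particular $\mathrm{Cov}(\xi_{j_l}, \xi_{j_{l'}}) \ge 0$ and hence $\var(R_t) \ge m/4$. The route I would take is to upgrade this to a stochastic domination, showing that for typical $t$ the law of $R_t$ dominates $\Bin(m, 1/2)$ in the convex order; since $x \mapsto |x - m/2|$ is convex, this yields the desired expectation bound. A final Hoeffding--Azuma argument, with one-edge Lipschitz constant $O(1)$, concentrates $\frac{1}{n}\sum_t |R_t - m/2|$ around its expectation. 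Combining everything then gives a.a.s.
\begin{equation*}
q^*(G_m^n) \;\ge\; q_{\mathcal{A}}(G_m^n) \;\ge\; \frac{1}{m}\, \ex\Big|\Bin(m,1/2) - \frac m2\Big| \;-\; o(1),
\end{equation*}
and the explicit parity-dependent formulas arise from expanding $\ex|\Bin(m,1/2) - m/2|$ via $\binom{m}{j}2^{-m}$; the $\Omega(1/\sqrt m)$ rate follows from Stirling's formula.

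The main obstacle is establishing the convex-order domination. The positive correlation is easy to see qualitatively, but making it quantitative enough to match the iid benchmark requires careful handling of the few high-degree ``hub'' vertices of $G_m^n$, which are ancestors of many later vertices and therefore the dominant source of correlations. I expect that bounds on the number of such hubs within a typical $v_t$'s out-neighbourhood (again using Lemma~\ref{lem:total_weight_specific} together with standard PA degree-concentration estimates) will suffice to reduce to the case where at most one out-neighbour comes from a hub, recovering the iid-in-the-limit approximation for $R_t$.
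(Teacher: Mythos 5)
Your overall architecture matches the paper's: generate $G_m^n$, label each new vertex by the majority colour of its $m$ out-neighbours, observe that the edge contribution is $\frac{1}{mn}\sum_t \max(R_t, m-R_t)$ where $R_t$ is the number of red out-neighbours, argue the degree tax is $1/2+o(1)$, and estimate $\ex|R_t - m/2|$ by the binomial quantity. However, there are two genuine gaps, one of which is serious.

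The serious one concerns the degree tax. You claim symmetry gives $\ex\vol(B_k)=mn$ and that a Hoeffding--Azuma argument then yields $\vol(B_k)=mn(1+o(1))$ a.a.s. Marginal uniformity of each $\xi_i$ does give the correct \emph{expectation}, but it does not give concentration. The majority rule is self-reinforcing: if red currently carries more than half the total degree, new vertices are more likely to become red, in a P\'olya-urn-like fashion. Tracking $Y_t=\sum_{w\le t, w\ \mathrm{red}}\deg(w,t)$, one finds (for $m\ge 3$) that the conditional drift of $Y_t-mt$ has a component $\Theta\big((Y_{t-1}-mt)/t\big)$ of the \emph{amplifying} sign (with coefficient $\Theta(\sqrt m)>1$), so $Y_t-mt$ is not a martingale and Azuma does not apply directly; one cannot simply assert balance. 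The paper resolves this by a small adaptive correction $p_t=p_t(Y_{t-1})=O(t^{-1/3})$ to the tie-breaking (and, for odd $m$, by shifting a little mass between the two nearest-to-tie outcomes), chosen precisely so that $\ex(Y_t-Y_{t-1}\mid Y_{t-1})=m$ exactly, making $Z_t=(Y_t-Y_{\eps n})-m(t-\eps n)$ a genuine martingale to which the Wormald/Pittel supermartingale inequality applies within a stopping time. Without this adaptive correction your balance claim is unjustified and, for $m\ge 3$, likely false.

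The second issue is more of a wasted detour than an error: the FKG/convex-order programme is both harder than necessary and (as you note) unclear how to complete. It is not needed. Conditioned on the history (in particular on $Y_{t-1}$), the $m$ out-edges of $v_t$ are each drawn independently proportional to current degree, so the probability each out-neighbour is red is $Y_{t-1}/(2mt)+O(1/t)$ and $R_t$ is literally (up to $O(1/t)$ shifts between the $m$ sub-steps) a $\Bin\big(m,\,Y_{t-1}/(2mt)+O(1/t)\big)$ random variable. Once the martingale argument forces $Y_{t-1}=mt(1+O(t^{-1/3}))$ up to the stopping time, this is $\Bin(m,1/2+O(t^{-1/3}))$ on the nose, and $\ex|R_t-m/2|=\ex|\Bin(m,1/2)-m/2|+O(t^{-1/3})$ follows immediately. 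No positive correlation or stochastic domination among the $\xi_i$'s is needed; the randomness you want to exploit is the fresh randomness of choosing the $m$ out-neighbours, not the dependence structure of the colouring.

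Two smaller remarks. First, the paper starts by running the process to time $\eps n$ and seeding the colouring on $[\eps n]$ (red on $[\eps n/4]$, blue on the rest), using Lemma~\ref{lem:total_weight_specific} to control the seed's degree split; you should say how your colouring is initialised, since a single seed vertex does not obviously give the concentration you need at early times. Second, the final concentration of $\frac1n\sum_t|R_t-m/2|$ is fine via Azuma as you suggest; the paper instead notes that the per-step contributions can be sandwiched by independent random variables and applies Chernoff, which is equivalent in spirit.
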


Before we prove the theorem, let us present numerical values for a few values of $m$: $L_1=L_1(m)=1/m$ is the lower bound following from Theorem~\ref{thm:PA_lower} and $L_2=L_2(m)$ is the lower bound from Theorem~\ref{thm:PA_lower-stronger}; see Table~\ref{tab:lower_bound_PA}. Large degree tax hidden in $L_2$ makes this bound weaker for small values of $m \le 6$; for larger values $L_2$ is better than $L_1$. 

\begin{table}[h]
\begin{center}
\caption{Lower bounds for $q^*(G_m^n)$}
  \begin{tabular}{ | c || c | c | c | c | c | c | }
    \hline
    $m$ & $7$ & $8$ & $9$ & $10$ & $100$ & $1000$ \\ \hline \hline
    $L_1$ & 0.142 & 0.125 & 0.111 & 0.100 & 0.0100 & 0.0010 \\
    $L_2$ & 0.156 & 0.136 & 0.136 & 0.123 & 0.0397 & 0.0126 \\
    \hline
  \end{tabular}
\label{tab:lower_bound_PA} 
\end{center}
\end{table}

\begin{proof}
Let $\eps > 0$ be any constant. Let us start with generating $G_m^{\eps n}$; vertices from $[\eps n / 4]$ are coloured red and vertices from $[\eps n] \setminus [\eps n / 4]$ are coloured blue. We will continue generating $G_m^n$, colouring vertices red or blue (one by one, as they are introduced in the process), depending on how many of their neighbours are of each colour. We want to control the sum of degrees of vertices in each colour; that is, the following random variable 
$$
Y_{t} := \sum_{w \in [t], w \text{ is red }} \deg(w, t).
$$
The colouring process depends on the parity of $m$. If $m$ is even, we colour vertex $t \in [n] \setminus [\eps n]$ red if more than $m/2$ neighbours (in $G_m^t$) are red. If the number of red neighbours is precisely $m/2$, we colour it red with probability $1/2 + p_t$, where $p_t = p_t(Y_{t-1}) = o(1)$ will be determined soon. Otherwise, $t$ is coloured blue. If $m$ is odd, the process is slightly different. If the number of red neighbours is more than $(m+1)/2$, we colour it red. If it is $(m+1)/2$ or $(m-1)/2$, we colour it red with probability $1 - q_t$ and, respectively $r_t$, where $q_t + r_t = q_t(Y_{t-1}) + r_t(Y_{t-1}) = o(1)$. Otherwise, $t$ is coloured blue. The arguments for both cases are almost identical so we assume now that $m$ is even; it will be clear what needs to be adjusted for odd value of $m$. In both situations, our hope is that the two graphs, induced by red and blue vertices, will be dense.

It follows from Lemma~\ref{lem:total_weight_specific} that a.a.s.\ $|Y_{\eps n} - m\eps n| \le (m\eps n)^{2/3}$, so we may assume that this inequality holds. This time we use the following stopping time
$$
T = \min \left\{ t > \eps n : | Y_t - mt | > 2(mt)^{2/3} \text{ or } t = n \right\}.
$$
Arguing as in the previous lemma, we get that
\begin{eqnarray*}
\ex (Y_t - Y_{t-1} ~~|~~ Y_{t-1}) &=& Y_{t-1} \left( \frac {m}{2mt} + O \left( t^{-2} \right) \right) + m \Bigg( \pr \left( \Bin \left( m, \frac {Y_{t-1} + O(1)}{2mt} \right) > m/2 \right) \\
&& \quad \quad \quad \quad + \left( \frac 12 + p_t \right) \pr \left( \Bin \left( m, \frac {Y_{t-1} + O(1)}{2mt} \right) = m/2 \right) \Bigg) \\ 
&=& \frac {m}{2} + O \left( t^{-1/3} \right) + m \Bigg( \pr \left( \Bin \left( m, \frac {1}{2} + O(t^{-1/3}) \right) > m/2 \right) \\
&& \quad \quad \quad \quad + \left( \frac 12 + p_t \right) \pr \left( \Bin \left( m, \frac {1}{2} + O(t^{-1/3}) \right) = m/2 \right) \Bigg),
\end{eqnarray*}
provided that $t<T$. Since
$$
\pr \left( \Bin \left( m, \frac {1}{2} + O(t^{-1/3}) \right) = i \right) = \pr \left( \Bin \left( m, \frac {1}{2} \right) = i \right) \left( 1 + O(t^{-1/3}) \right)
$$
and
$$
\pr \left( \Bin \left( m, \frac {1}{2} \right) > m/2 \right) + \frac 12 \ \pr \left( \Bin \left( m, \frac {1}{2} \right) = m/2 \right) = \frac 12,
$$
we get that
\begin{eqnarray*}
\ex (Y_t - Y_{t-1} ~~|~~ Y_{t-1}) &=& m + p_t \ \pr \left( \Bin \left( m, \frac {1}{2} \right) = m/2 \right) + O(t^{-1/3}).
\end{eqnarray*}
Since $\pr \left( \Bin \left( m, 1/2 \right) = m/2 \right) = \binom{m}{m/2} 2^{-m} = \Theta(1)$, we can adjust $p_t = p_t(Y_{t-1}) = O(t^{-1/3})$ so that $\ex (Y_t - Y_{t-1} ~~|~~ Y_{t-1}) = m$; that is, the sequence of random variables $Z_t = (Y_t - Y_{\eps n}) - m(t-\eps n)$ is a martingale. It follows from the classic Hoeffding-Azuma inequality~(\ref{eq:HA-inequality1}), applied to $Z_t$ with $c_t = m$ and $x = (mn)^{2/3}$, that a.a.s., for each $\eps n \le t \le n$,
$$
|Y_t - mt| \le |Z_t| + |Y_{\eps n} - m \eps n| \le x + (m\eps n)^{2/3} \le 2 (mn)^{2/3} = o(t).
$$

The rest of the proof is straightforward. We partition the vertex set of $G_n^m$ into red and blue vertices. The degree tax is a.a.s.\ 
$$
\frac {Y_n^2 + (2mn - Y_n)^2}{4 (mn)^2 } = \frac 12 + o(1).
$$
It remains to estimate the edge contribution. Clearly, the process guarantees that at least half of the edges are within the two clusters. However, we will do slightly better than that. For any $i \in [m/2]$, with probability asymptotic to $2 \pr (\Bin(m,1/2)=m/2+i)$, at any point of the process we add $m/2+i$ edges to some cluster; $m/2$ edges are added with probability asymptotic to $\pr (\Bin(m,1/2)=m/2)$. Hence, the expected number of edges added to some cluster is asymptotic to 
$$
m/2 + \ex \left( \Big| \Bin (m,1/2) - m/2 \Big| \right) = m/2 + 2 \sum_{i=1}^{m/2} i \binom{m}{m/2+i} 2^{-m}.
$$
The expected edge contribution is then asymptotic to 
$$
1/2 + \ex \left( \Big| \Bin (m,1/2) - m/2 \Big| \right) (n-\eps n) / (mn).
$$
Finally, one can bound the edge contribution (independently, from above and from below) by the sum of independent random variables, and use Chernoff bound to get a concentration. It follows that a.a.s.\ 
$$
q^*(G_m^n) \ge \ex \left( \Big| \Bin (m,1/2) - m/2 \Big| \right) (1-\eps) / m + o(1),
$$
and the result holds after taking $\eps \to 0$ sufficiently slowly.

Finally, some elementary calculations show that for any $t \in [0,m/8]$, we have
$$
\pr \left( \Bin (m,1/2) \ge m/2  + t \right) \ge \frac {e^{-16t^2/m}}{15};
$$
see, for example,~\cite{book}. (More general and precise bounds can be found in~\cite{Feller}.) It follows that a.a.s.\ $q^*(G_m^n) \ge 2 (\sqrt{m}/8) e^{-1/4} / (15 m) + o(1) = \Omega(1/\sqrt{m})$, and the proof is finished.
\end{proof}

\subsection{Upper bound}

Recall that the \emph{edge expansion} $\rho=\rho(G)$ of a graph $G$ is defined as follows:
$$
\rho = \min_{S \subset V(G), |S| \le |V|/2} \frac {e(S,V \setminus S)}{|S|}.
$$
In~\cite{Mihail} it was shown that a.a.s.\ $\rho(G^n_m) \ge \alpha$, provided that $2(m-1)-4\alpha-1>0$. In other words, for any $\eps>0$ we have that a.a.s.
$$
\rho(G^n_m) \ge \frac {m}{2} - \frac{3+\eps}{4}.
$$
Using this observation one can easily obtain a non-trivial upper bound for $q^*(G_m^n)$.

Let $\eps > 0$ be an arbitrary small constant. Consider any partition $\A = \{A_1, \ldots, A_k\}$ of the vertex set $V(G_m^n)$. If $|A_i| > n/2$ for some $i$, then the degree tax is at least
$$
\frac { (\sum_{v \in A_i} \deg(v, n))^2}{4 |E(G_m^n)|} \ge \frac {(m|A_i|)^2}{4(mn)^2} = \frac {1}{16}. 
$$
On the other hand, if $|A_i| \le n/2$ for all $i$, then a.a.s.\ the number of edges between parts is equal to 
$$
\frac 12 \sum_{i=1}^k e(A_i, V \setminus A_i) \ge \frac 12 \sum_{i=1}^k \rho |A_i| = \frac {\rho n}{2} \ge \left( \frac {m}{4} - \frac{3+\eps}{8} \right) n,
$$
and so the edge contribution is a.a.s.\ at most 
$$
1 - \left( \frac {1}{4} - \frac{3+\eps}{8m} \right) = \frac {3}{4} + \frac{3+\eps}{8m} \le \frac {15+\eps}{16},
$$
for any $m \ge 2$. Therefore, the following result holds. 

\begin{thm}\label{thm:PA_upper}
For any $\varepsilon>0$ a.a.s.
$$
q^*(G_2^n) \le \frac {15+\eps}{16}.
$$
Moreover, for any $m \ge 3$ a.a.s.
$$
q^*(G_m^n) \le \frac {15}{16}.
$$
\end{thm}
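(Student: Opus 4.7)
The proof plan is to formalize the argument already sketched in the paragraph preceding the theorem, paying attention only to where the constants $15/16$ and $(15+\eps)/16$ come from and why the $\eps$ can be absorbed for $m \ge 3$ but not for $m = 2$. The key input is the edge-expansion bound of~\cite{Mihail} which I would invoke in the form: for any fixed $\eps > 0$, a.a.s.\ $\rho(G_m^n) \ge m/2 - (3+\eps)/4$. Two additional structural facts about the model are used without comment in the sketch and I would make them explicit: $|E(G_m^n)| = mn$ exactly (each of the $n$ rounds introduces $m$ edges, counting loops/multi-edges), and every vertex has degree at least $m$ (the $m$ out-edges created when it is introduced).

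Given any partition $\A = \{A_1, \ldots, A_k\}$, I would split into two cases depending on whether some part contains more than half the vertices. In the first case, if $|A_i| > n/2$ for some $i$, then $\sum_{v \in A_i} \deg(v,n) \ge m|A_i| > mn/2$, so the single summand of $A_i$ in the degree tax is already at least $(mn/2)^2 / (4(mn)^2) = 1/16$. Since the edge contribution is bounded by $1$ and the remaining degree-tax terms are nonnegative, $q_{\A} \le 15/16$.

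In the second case, every $|A_i| \le n/2$, so the edge-expansion hypothesis applies to each $A_i$ and gives $e(A_i, V \setminus A_i) \ge \rho |A_i|$. Summing over $i$ and dividing by $2$ (each inter-cluster edge is counted twice), the number of inter-cluster edges is at least $\rho n / 2$, so the edge contribution is at most
\[
1 - \frac{\rho n / 2}{mn} \;\le\; 1 - \frac{1}{2m}\left(\frac{m}{2} - \frac{3+\eps}{4}\right) \;=\; \frac{3}{4} + \frac{3+\eps}{8m}.
\]
Again the degree tax is nonnegative, so this bounds $q_{\A}$. Taking the maximum of the two case bounds yields an upper bound on $q^*(G_m^n)$.

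For $m = 2$ the second-case expression equals $(15+\eps)/16$, matching the first-case bound $15/16$, so the unified a.a.s.\ bound is $(15+\eps)/16$. For $m \ge 3$, using $m \ge 3$ we have $3/4 + (3+\eps)/(8m) \le 3/4 + (3+\eps)/24 = (21+\eps)/24$; since $21/24 = 7/8 < 15/16$, one can choose $\eps$ small enough (as a function of $m$, but $m$ is fixed) so that this is $\le 15/16$, matching the first-case bound. There is no real obstacle here; the only mildly delicate point is recognizing that the cutoff $|A| = n/2$ is exactly the value that balances the two cases and produces the clean constant $15/16$, and that for $m = 2$ this balance is already saturated so the $\eps$ from~\cite{Mihail} cannot be removed, while for $m \ge 3$ there is enough slack to absorb it.
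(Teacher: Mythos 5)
Your proposal is correct and is essentially the paper's own argument: it invokes the same edge-expansion bound from Mihail, Papadimitriou, and Saberi, splits into the same two cases around $|A_i| \lessgtr n/2$, and derives the same $1/16$ degree-tax bound and $3/4 + (3+\eps)/(8m)$ edge-contribution bound. The only difference is that you spell out explicitly why the $\eps$ can be absorbed for $m \ge 3$ but not for $m = 2$ (a step the paper leaves implicit), and you note in passing a small typo in the paper's display for the degree tax ($4|E(G_m^n)|$ should be $4|E(G_m^n)|^2$).
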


Some stronger expansion properties were recently obtained in~\cite{Frieze}. However, whereas they presumably could be used to obtain some small improvements for an upper bound of $q^*(G_m^n)$ (for specific values of $m$), we do not know how to show that $q^*(G_m^n) \to 0$ as $m \to \infty$. Perhaps $q^*(G_m^n) = \Theta(1/\sqrt{m})$ as in the case of random $(2m)$-regular graphs?

%Much stronger expansion property was recently obtained in~\cite{Frieze}. We are currently working on using this property to obtain general upper bound for $q^*(G_m^n)$ that holds for any integer $m$ as well as specific stronger bounds for small values of $m$. \pc{We need to remember to deal with that. ;-) Or remove this sentence...} \lc{Do you have this paper finished, so we can use it here?} \pc{Yes, it is on my web page. But the question is what can be said using it. We can check it, and if it is easy to say something new, we should say it. If it requires a lot of work, then we can perhaps leave it as is. But let's give it a try. Maybe, at the worst case, we can say something for some values of $m$ and put upper bounds in table 2 (hopefully better than 15/16).} Details will be provided in the journal version of this paper.

\section{The Spatial Preferential Attachment model}\label{sec:SPA}

Consider $G_{n}=(V_{n},E_{n})$, a graph generated by the SPA model. As the modularity is defined for undirected graphs, we consider $\hat{G}_n$ that is a graph obtained from $G_n$ by replacing each directed edge $(u,v)$ by undirected edge $uv$. (As edges in $G_n$ are always from `younger' to `older' vertices, there is no problem with generating multigraph; $\hat{G}_n$ is a simple graph.) Let us recall that $V_n \subseteq S$ where $S$ is the unit hypercube $[0,1]^m$. We will use the geometry of the model to obtain a suitable partition that yields high modularity of $G_n$.  The following properties (proved many times; see, for example,~\cite{spa1,spa2}) are the only properties of the model that will be used in the proof: a.a.s.\ for every pair $i,t$ such that $1 \le i \le t \le n$ we have that
\begin{eqnarray}
\deg^-(v_i, t) &=& O \Big( (t/i)^{pA_1} \log^2 n \Big), \label{eq:degree} \\
\deg^+(v_i, t) &=& O \Big( \log^2 n \Big), \label{eq:outdegree}
\end{eqnarray}
and $|E(G_n)| = \Theta(n)$. Since we aim for a result that holds a.a.s., we may assume in the proof below that these properties hold deterministically. Now, we are ready to state our result for the SPA model.

\begin{thm}\label{thm:spa_mod}
Let $p\in(0,1]$, $A_1, A_2 >0$, and suppose that $pA_1 < 1$. Then, the following holds a.a.s.:
$$
q^*(\hat{G}_n) = 1 - O \left( n^{\max \{-1/m,-1+pA_1\} / 2} \log^{9/2} n \right) = 1 - o(1).
$$
\end{thm}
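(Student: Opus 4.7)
My plan is to exploit the spatial structure of the model directly by partitioning the torus $S=[0,1]^m$ into $\ell^{-m}$ axis-aligned sub-cubes of side length $\ell$, where $\ell=n^{-1/(2m)}$, and taking $\A$ to be the induced partition of $V_n$ in which two vertices share a part iff they lie in the same sub-cube. The intuition is that the spheres of influence $S(v,t)$ have small volume by~(\ref{eq:volume}) and~(\ref{eq:degree}), so most edges stay inside a single cell and contribute positively to the edge contribution, while each cell contains only $\Theta(n\ell^m)$ vertices whose total volume is proportionally small, keeping the degree tax low.

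For the edge contribution, the key quantity is the number of \emph{bad} edges crossing cell boundaries. An edge $\{v_t,v_i\}$ with $i<t$ is bad precisely when $v_t\in S(v_i,t-1)\setminus C(v_i)$, where $C(v_i)$ is the cell containing $v_i$. Since $v_t$ and $v_i$ are independently uniform on the torus, a short coordinate-wise integration bounds $\E|S(v_i,t-1)\setminus C(v_i)|$ by $O(\min\{r_i(t-1)/\ell,\,1\})\cdot|S(v_i,t-1)|$, where $r_i(t-1) = \Theta(|S(v_i,t-1)|^{1/m})$. I would then split the sum over pairs $(i,t)$ according to whether $r_i(t-1)\le\ell$ or not. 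In the small-ball regime, summing $r|S|/\ell$ using $r_i(t-1)\le r_i(n) = O(n^{(pA_1-1)/m}i^{-pA_1/m}\log^{2/m}n)$ and $\sum_{t>i}p|S(v_i,t-1)|\le\deg^-(v_i,n)$ from~(\ref{eq:degree}) yields $O(n^{1-1/(2m)}\log^{O(1)}n)$ bad edges, i.e.\ a contribution $O(n^{-1/(2m)}\log^{O(1)}n)$ to the deficit. In the large-ball regime (which only concerns $i\lesssim n^{1/2}$, since $r_i(t)$ is decreasing in $t$), I bound trivially $|S\setminus C|\le |S|$ and use $\sum_{i<n^{1/2}}\deg^-(v_i,n) = O(n^{(1+pA_1)/2}\log^{O(1)}n)$, giving a deficit contribution of $O(n^{(pA_1-1)/2}\log^{O(1)}n)$.

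For the degree tax, I would expand $\sum_C \vol(C)^2 = \sum_{i,j}\deg(v_i)\deg(v_j)\mathbf{1}[v_i,v_j\text{ in same cell}]$ and average over the independent uniform positions to obtain the bound $\sum_i \deg(v_i)^2 + \ell^m(2|E|)^2$. The second summand contributes $\Theta(\ell^m)=\Theta(n^{-1/2})$ to the degree tax, while~(\ref{eq:degree}) gives $\sum_i\deg(v_i)^2 = O(n^{\max\{1,2pA_1\}}\log^{O(1)}n)$, contributing $O(n^{\max\{-1,2pA_1-2\}}\log^{O(1)}n)$. Since $pA_1<1$ one checks $2pA_1-2 \le (pA_1-1)/2$, so this term is dominated by $n^{(pA_1-1)/2}$, and $n^{-1/2}\le n^{-1/(2m)}$ for $m\ge 1$.

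Combining these estimates gives $q_\A \ge 1 - O(n^{\max\{-1/m,pA_1-1\}/2}\log^{O(1)}n)$ in expectation, matching the target. The final step is standard concentration: Markov's inequality applied to the total number of bad edges together with a Chernoff-type bound on per-cell volumes upgrades the expected bounds to the a.a.s.\ statement, since~(\ref{eq:degree}) and~(\ref{eq:outdegree}) already hold a.a.s. The main obstacle I anticipate is the tight entanglement of the heavy-tailed degree distribution with the geometry: the oldest vertex $v_1$ unavoidably contributes $\Theta(n^{2pA_1-2})$ to the degree tax, and its sphere $S(v_1,t)$ exceeds the cell size for small $t$, which is exactly why the large-ball regime must be analyzed separately and why the final rate is the maximum of two competing exponents. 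Fortunately the hypothesis $pA_1<1$ keeps all of these contributions safely below the claimed bound.
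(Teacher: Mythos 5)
Your route is genuinely different from the paper's: you fix a hypercube grid of side $\ell=n^{-1/(2m)}$, giving $n^{1/2}$ cells, whereas the paper uses $\omega=\Theta\big(n^{\min\{1/m,1-pA_1\}/2}\log^{-1/2}n\big)$ strips along a single coordinate, so the number of parts adapts to $pA_1$. Your finer grid overshoots on the degree tax (giving $\approx n^{-1/2}$, smaller than the target rate), which is fine, and makes the edge contribution the binding constraint; the paper instead balances the two. Both routes can deliver the claimed rate, and your per-pair $(i,t)$ accounting is an acceptable substitute for the paper's dyadic age blocks $e^k\le i<e^{k+1}$. Your remark about conditioning on~(\ref{eq:degree}) and~(\ref{eq:outdegree}) before averaging over positions is also the right way to handle the position--degree dependence that your ``independent uniform positions'' step otherwise glosses over; the paper does exactly this by assuming these bounds hold deterministically.

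However, there is a concrete error in your small-ball estimate: you write $r_i(t-1)\le r_i(n)$, but since $pA_1<1$ the radius $r_i(t)=\Theta(|S(v_i,t)|^{1/m})$ is (up to polylog factors) \emph{decreasing} in $t$, which you yourself invoke two sentences later to confine the large-ball regime to $i\lesssim n^{1/2}$. The correct uniform bound is $r_i(t)=O\big((i^{-1}\log^2 n)^{1/m}\big)$, attained near the birth time $t\approx i$, whereas your $r_i(n)=O\big(n^{(pA_1-1)/m}i^{-pA_1/m}\log^{2/m}n\big)$ is smaller by the factor $(n/i)^{(pA_1-1)/m}\le 1$. So as written you undercount the bad edges. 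Fortunately the approach survives the fix: restricting the small-ball sum to $i\gtrsim n^{1/2}\log^2 n$ (so that the max radius is below $\ell$) and using the correct $r_i^{\max}$, the sum $\frac{1}{\ell}\sum_{i}r_i^{\max}\deg^-(v_i,n)$ yields a deficit $O\big(n^{\max\{-1/(2m),(pA_1-1)/2\}}\log^{O(1)}n\big)$, matching the theorem. But note that when $pA_1>1-1/m$ the small-ball contribution is then of order $n^{(1+pA_1)/2}$ rather than the $n^{1-1/(2m)}$ you state, so your intermediate claim is also off in that regime even though the final exponent $\max\{-1/m,pA_1-1\}/2$ comes out right. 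You should rerun the two-regime computation with the corrected radius bound and adjust those intermediate statements accordingly.
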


\begin{proof}
Let $\omega = \left[n^{\min \{1/m,1-pA_1\} / 2} \log n^{-1/2}\right]$. Note that $\omega \ge n^{\eps}$ for some $\eps>0$ that depends on the parameters of the model. Let us partition the space $S$ into $\omega$ parts as follows: for each integer $1 \le r \le \omega$,
$$
S_r = \left\{ s = (s_1, \ldots, s_m) \in S : \frac {r-1}{\omega} \le s_1 < \frac {r}{\omega} \right\}.
$$
This partition of $S$ naturally gives us a partition $\A$ of the vertex set: for each $1 \le r \le \omega$, $A_r = V_n \cap S_r$. We will show that a.a.s.\ 
$$
q_{\A}(\hat{G}_n) = 1 - O \left( n^{\max \{-1/m,-1+pA_1\} / 2}  \log^{9/2} n \right),
$$
which will finish the proof as $q^*(\hat{G}_n) \ge q_{\A}(\hat{G}_n)$ and always $q^*(\hat{G}_n) \le 1$.

First, let us start with estimating the edge contribution. In order to do that, we need to estimate the number of edges between different parts. So, let us focus on any part $A_r$. We will investigate how many \emph{bad} edges in $G_n$ connect vertices outside of $A_r$ with vertices inside $A_r$ by counting (independently) bad edges directed to vertices of similar age. (Note that for convenience we consider here  directed graph $G_n$ instead of $\hat{G}_n$.)  For a given integer $k$ such that $0 \le k \le \lfloor \log n \rfloor$, let
\begin{eqnarray*}
V^{(k)} &=& \{ v_i \in V_n : e^{k} \le i < \min \{e^{k+1}, n+1\} \},\\
E^{(k)} &=& \{ (v_j, v_i) \in E_n : v_i \in V^{(k)}, v_j \in V_n, \text{ and } i < j \le n \} \\
C^{(k)} &=& \{ (v_j, v_i) \in E_n : v_i \in V^{(k)}\cap A_r, v_j \in V_n \setminus A_r, \text{ and } i < j \le n \} \subseteq E^{(k)}.
\end{eqnarray*}
It is clear that $\{ V^{(k)} : 0 \le k \le \lfloor \log n \rfloor \}$ and $\{ E^{(k)} : 0 \le k \le \lfloor \log n \rfloor \}$ are partitions of the vertex set and the edge set (both in $\hat{G_n}$ and $G_n$), respectively, and so $\{ C^{(k)} : 0 \le k \le \lfloor \log n \rfloor \}$ is a partition of the bad edges we want to count. It remains to estimate the size of $C^{(k)}$ for a given value of $k$.

Fix $0 \le k \le \lfloor \log n \rfloor$, and let us concentrate on any $v_i \in V^{(k)}$. It follows from~(\ref{eq:degree}) that the maximum volume of a sphere of influence of $v_i$ is $O(i^{-1} \log^2 n) = O(e^{-k} \log^2 n)$ (during the whole process) and so the maximum radius of influence of $v_i$ is $O((e^{-k} \log^2 n)^{1/m})$. Therefore, if there is an edge in the cut directed to $v_i=(s_1,\ldots,s_m)$, then $v_i$ must fall not only into $A_r$ but also into a strip within distance $O((e^{-k} \log^2 n)^{1/m})$ from one of the two cutting hyperplanes separating $A_r$ from the neighbouring parts; that is, $|s_1- \frac {r-1}{\omega}|=O((e^{-k}\log^2 n)^{1/m})$ or $|s_1- \frac {r}{\omega}|=O((e^{-k}\log^2 n)^{1/m})$. Since $|V^{(k)}|=O(e^k)$, we get that
$$
O((e^{-k} \log^2 n)^{1/m}) \cdot |V^{(k)}| = O(e^{k(1-1/m)} (\log n)^{2/m})
$$
vertices of $V^{(k)}$ are expected to appear in these two strips during the whole process. Hence, it follows from Chernoff bound that with probability at least $1-\exp(-\Theta(\log^2 n))$ there are $O(e^{k(1-1/m)} \log^2 n)$ vertices in these strips at the end of the process. Note that the exponent of $\log n$ has changed from $2/m$ to $2$ in order to guarantee the claimed upper bound is at least $\log^2 n$ which is required for a bound to hold with the desired probability. Using~(\ref{eq:degree}) one more time, we get that all vertices introduced in this time period have (final) in-degree $O((n/e^k)^{pA_1} \log^2 n)$. Hence, there are
$$
|C^{(k)}| = O\left( (e^{k(1-1/m)} \log^2 n) \cdot (n/e^k)^{pA_1} \log^2 n \right) = O \left(n^{pA_1} e^{k(1-1/m - pA_1)} \log^4 n \right)
$$
edges in the cut with probability at least $1-\exp(-\Theta(\log^2 n))$ and so this property holds a.a.s.\ for all parts $A_r$ and all values of $k$. It follows that a.a.s.\ the number of bad edges involving $A_r$ is at most
\begin{align*}
\xi_r &= \sum_{k=0}^{\lfloor \log n \rfloor} |C^{(k)}| =
\sum_{k=0}^{\lfloor \log n \rfloor} O(n^{pA_1} e^{k(1-1/m - pA_1)} \log^4 n) \\
& \le \left\{
    \begin{array}{ll}
      \log n \cdot O \left(n^{pA_1} n^{1-1/m - pA_1} \log^4 n \right), & \hbox{if $pA_1 < 1-1/m$;} \\
      \log n \cdot O(n^{pA_1} \log^4 n), & \hbox{otherwise,}
    \end{array}  \right. \\
&=O(n^{\max\{1-1/m,pA_1\}} \log^5 n).
\end{align*}
Finally, we get an estimate for the edge contribution: a.a.s.
\begin{align*}
\sum_{r = 1}^{\omega} \frac{e(A_r)}{|E(G_n)|} &= 1 - \sum_{r = 1}^{\omega} \frac{\xi_r}{|E(G_n)|} = 1 - \frac {\omega \cdot O(n^{\max\{1-1/m,pA_1\}} \log^5 n)}{\Theta(n)} \\
&= 1 - O \left( n^{\max \{-1/m,-1+pA_1\} / 2} \log^{9/2} n \right).
\end{align*}

%\lc{I remember that I checked this before the submission to WAW, but yesterday I could not understand why $\omega \cdot O(n^{\max\{1/m,pA_1-1\}} \log^5 n) = O \left( n^{\max \{-1/m,-1+pA_1\} / 2} \log^{9/2} n \right)$. In other words, why $O\left(n^{\max \{1/m,1-pA_1\} / 2 + \max\{-1/m,pA_1-1\}} \right) = O \left( n^{\max \{-1/m,-1+pA_1\} / 2} \right)$. Could you, please, double check? Do we have to write $\min$ in the definition of $\omega$?} \pc{Yes, I think you are right. We want 
%$$
%\omega = (n^{\max \{-1/m,-1+pA_1\}} )^{-1/2} \log^{-1/2} n = n^{\min \{1/m,1-pA_1\}/2} \log^{-1/2} n
%$$ 
%I hope I fixed it.}

It remains to estimate the degree tax. In order to do that we need to, for a given $r$ under consideration, estimate $\sum_{v \in A_r} \deg(v)$ in $\hat{G}_n$; that is, $\sum_{v \in A_r} (\deg^-(v)+\deg^+(v))$ in $G_n$. As before, we partition the vertices of $A_r$ into sets containing vertices of similar age. Let $k_0$ be the largest integer $k$ such that $(k-1)\omega \log^2 n < n$. Clearly, $k_0 = O(n / (\omega \log^2 n)$. This time, for a given integer $k$ such that $1 \le k \le k_0$, let
$$
V^{(k)} = \{ v_i \in V_n : (k-1) \omega \log^2 n  < i \le \min \{ k \omega \log^2 n, n\} \},
$$
and our goal is to estimate the size of $A_r \cap V^{(k)}$. The expected number of vertices of $V^{(k)}$ that fall into $A_r$ is $|V^{(k)}| / \omega \le \log^2 n+1$ and it follows from Chernoff's bound that with probability at least $1-\exp(-\Theta(\log^2 n))$ it is $O(\log^2 n)$. Using~(\ref{eq:degree}) for the last time, we get that all vertices introduced in this time period have (final) in-degree $O((n/(k \omega \log^2 n) )^{pA_1} \log^2 n)$, provided $k \ge 2$; and $O(n^{pA_1} \log^2 n)$ for $k=1$. It follows that with the desired probability 
\begin{align*}
\sum_{v \in A_r} \deg^-(v) &= O(n^{pA_1} \log^2 n) +  \sum_{k=2}^{k_0} O((n/(k \omega \log^2 n) )^{pA_1} \log^2 n) \cdot O(\log^2 n) \\
&= O(n^{pA_1} \log^2 n) +  O((n/(\omega \log^2 n) )^{pA_1} \log^4 n) \cdot O( k_0^{1-pA_1}) = O(n \log^2 n / \omega),
\end{align*}
and so it holds a.a.s.\ for all $r$. Similarly, using Chernoff's bound and~(\ref{eq:outdegree}) we get that a.a.s.\ for all $r$ we have $|A_r| \sim n/\omega$ and so
$$
\sum_{v \in A_r} \deg^+(v) = O( n / \omega) \cdot O(\log^2 n) = O(n \log^2 n / \omega).
$$
Finally, we are able to get an estimate for the degree tax in $\hat{G_n}$: a.a.s.
\begin{multline*}
\sum_{r=1}^\omega \frac { (\sum_{v \in A_r} \deg(v))^2 }{4|E(G_n)|^2} = \frac {\omega \cdot O( (n \log^2 n / \omega)^2)}{\Theta(n^2)} \\ = O( \log^4 n / \omega)  = O \left( n^{\max \{-1/m,-1+pA_1\} / 2} \log^{9/2} n \right),
\end{multline*}
and the proof is finished.
\end{proof}

%\lc{We can easily do the following here: write a definition of modularity for directed graphs and state a theorem about the directed modularity of SPA model. Do you think we should do this? I can add this result.} \pc{Up to you. I am ok. But again, I think we should keep it short as the paper is getting too long I am afraid. Thanks.}

\section{Discussion and future research}\label{sec:conclusion}

In this paper, we investigated modularity and provided precise theoretical bounds for several random graph models, such as random $d$-regular graphs, constant average degree graphs, preferential attachment and SPA models. However, there are plenty of directions for future research. 
For example, for preferential attachment model we expect that $q^*(G_m^n) = \Theta(1/\sqrt{m})$.
However, even the fact that $q^*(G_m^n) \to 0$ as $m \to \infty$ is still unproven.

Also, in this paper we studied the most popular version of modularity, while other definitions (suitable for some particular clustering problems) were proposed in the literature (see discussion in~\cite{Fortunato}). For example, it was proposed to multiply the degree tax by a resolution parameter $\gamma$. Note that most of our results can be easily extended to such definition, as we separately estimate edge contribution and degree tax. 
Also, Erd\H{o}s--R{\'e}nyi random graph model can be used as a null model (instead of the pairing model) to compute the degree tax. 
This version of modularity is much easier to analyze, but such null model cannot describe real networks well, since it has an unrealistic Poisson degree distribution.

Finally, we would like to note that there is another model, which, similarly to SPA, combines geometry and preferential attachment~\cite{GeoPA}.  It would be interesting to investigate the modularity for this model and
we expect that its modularity tends to 1 (as for the SPA model). However, these two models are different and our result does not imply anything for the other model.

\section*{Acknowledgements}

This work is supported by Russian Science Foundation (grant number 16-11-10014), NSERC, The Tutte Institute for Mathematics and Computing, and Ryerson University.

\section*{Appendix}

\subsection{Random $d$-regular graphs, some ideas for an upper bound}

\textbf{Idea 1}: Recall that in order to get the current best upper bound we showed that a.a.s.\ no set of size $xn$ induces more than $\bar y(x,d)xn/2$ edges. As a result the largest value of $y_i/d-x_i$ in~(\ref{eq:q_A_sim}) is at most $U_3(d)$. For example, for $d=3$ the optimal choice that maximizes $U_3(3)$ is: $x = \hat{x} \approx 0.0225$, $y = \hat{y} \approx 2.4789$, and so $U_3(3) \approx 0.8038$ as reported in Table~\ref{tab:upper_bound}. However, clearly it is impossible to partition a graph precisely into parts of size $\hat{x} n$. It is possible to show that the following upper bound holds, which is clearly not larger than the previous one:
$$
U_4 = U_4(d) := \max_{k \in \nat \setminus \{1\}} \left( \frac{\bar y(1/k,d)}{d} - 1/k \right).
$$
Unfortunately, this maximum value is achieved for $k=45$ (which corresponds to parts of size roughly $0.0222 n$, and no improvement is achieved: $U_4(3) \approx 0.8038$. The reason this idea fails is that the optimal value of $\hat{x}$ is small so that rounding to the nearest integer for $k$ does not improve the bound much.

\textbf{Idea 2}: Let us look at~(\ref{eq:q_A_sim}) again but this time let us order the terms so that 
$$
\left( \frac{y_1}{d} - x_1 \right) \ge \left( \frac{y_2}{d} - x_2 \right) \ge \ldots \ge \left( \frac{y_k}{d} - x_k \right).
$$
It follows that
$$
q_{\A} = \sum_{i=1}^k x_i \left( \frac{y_i}{d} - x_i \right) \ge x_1 \left( \frac{y_1}{d} - x_1 \right) + (1-x_1) \left( \frac{y_2}{d} - x_2 \right).
$$
It is slightly more tedious than before, but one can get an improvement by considering (ordered) disjoint pairs of vertices $X_1, X_2$ with $|X_1| = x_1n$, $|X_2|=x_2n$, $e(X_1)=y_1x_1n/2$, $e(X_2)=y_2x_2n/2$, and $e(X_1,X_2) = zn$. Unfortunately, this idea also does not provide any reasonable improvement.  For $d=3$, the expected number of pairs of sets for the following vector $(x_1,x_2,y_1,y_2,z) = (\hat{x_1}, \hat{x_2}, \hat{y_1}, \hat{y_2}, \hat{z}) \approx (0.0239,0.0225,2.4830,2.4790,0.000037)$ and, again, no substantial improvement is achieved: $U_5(3) \approx 0.8038$.

\textbf{Idea 3}: As before, let us concentrate on the case $d=3$ but similar ideas can be used for any integer $d \ge 3$. We can try to use the fact that $G_{n,3}$ can be constructed by putting a random matching on the vertices of a Hamiltonian cycle. Let us fix any set of vertices of size $xn$ that induces $zn$ components (paths) by restricting only to edges of the Hamiltonian cycle. Each such set can be represented by the following triple: vertex $v$, vector $(a_1-1, \ldots, a_{zn}-1)$, and vector $(b_1-1, \ldots, b_{zn}-1)$: $v$ starts some path, $a_i$ is the number of vertices on path $i$, $b_i$ is the number of vertices not in the set and right after path $i$. The number of such sets is at most $n {xn \choose zn} {(1-x)n \choose zn}$. The number of edges within this set that are part of the Hamiltonian cycle is $xn-zn$. Hence, in order for the set to induce $yxn/2$ edges, $(yx/2-x+z)n$ edges must be coming from the matching. 

The hope is (that is, \emph{was}) that for small values of $z$, there are only a few sets to consider. On the other hand, if $z$ is closer to $x$, then less edges are ``for free'' (edges of the Hamiltonian cycle). Unfortunately, again this idea does not lead to any substantial improvement. Concentrating on $d=3$, $\hat{x} = 0.0225$, $\hat{y} = 2.4789$, and tuning $\hat{z} \approx 0.00392$, the expected number of such sets is tending to infinity as $n \to \infty$.

\textbf{Conclusion}: The lack of improvement is disappointing but perhaps should not be surprising. Looking at one or two parts of a partition maximizing $q^*$ is not enough (local property). Having one large term $y_i/d-x_i$ in~(\ref{eq:q_A_sim}) might be possible but having all of them to be large perhaps is not. So in order to improve the upper bound, one needs to consider all parts at the same time (global property).

\end{document}